\mathchardef\mhyphen="2D
\newcommand{\fas}{\mathcal{F}(as)}
\newcommand{\ifas}{\mathcal{IF}(as)}
\newcommand{\llb}{\left\lbrace}
\newcommand{\rrb}{\right\rbrace}
\renewcommand{\phi}{\varphi}
\newcommand{\llangle}{\left\langle}
\newcommand{\rrangle}{\right\rangle}
\newcommand{\ol}{\overline}
\newtheorem{thm}{Theorem}[section]
\theoremstyle{plain}
\newtheorem{lem}[thm]{Lemma}
\newtheorem{cor}[thm]{Corollary}
\newtheorem*{theorem*}{Theorem}
\theoremstyle{definition}
\newtheorem{defn}[thm]{Definition}
\theoremstyle{remark}
\newtheorem{rem}[thm]{Remark}
\newtheorem{eg}[thm]{Example}
\title{Pirashvili--Richter-type theorems for the reflexive and dihedral homology theories}
\author{Daniel Graves}
\address{Lifelong Learning Centre, University of Leeds, Woodhouse, Leeds, LS2 9JT, UK}
\email{dan.graves92@gmail.com}
\date{}
\begin{document}

\keywords{reflexive homology, dihedral homology, functor homology, crossed simplicial group, involutive non-commutative sets, involutive algebra}
\subjclass{18G15, 18G90, 16E40, 16E30}


\maketitle

\begin{abstract}
Reflexive homology and dihedral homology are the homology theories associated to the reflexive and dihedral crossed simplicial groups respectively. The former has recently been shown to capture interesting information about $C_2$-equivariant homotopy theory and its structure is related to the study of ``real" objects in algebraic topology. The latter has long been of interest for its applications in $O(2)$-equivariant homotopy theory and connections to Hermitian algebraic $K$-theory. In this paper, we show that the reflexive and dihedral homology theories can be interpreted as functor homology over categories of non-commutative sets, after the fashion of Pirashvili and Richter's result for the Hochschild and cyclic homology theories.
\end{abstract}

\section*{Introduction}

\emph{Functor homology} plays an important role in modern homological algebra. The subject was pioneered by Connes (\cite{Connes}) who described the cyclic homology theory as derived functors over the cyclic category. This was extended to incorporate dihedral structure by Loday (\cite{Loday-dih}) and Krasauskas, Lapin and Solov'ev (\cite{KLS-dihed}). Collectively, this work led to the notion of a crossed simplicial group (see \cite{FL} and \cite{Kras-skew}). Crossed simplicial groups extended the notion of cyclic homology to detect other structure that is compatible with a unital, associative multiplication, such as an order-reversing involution.

A great many homology theories for algebras have interpretations in terms of functor homology: Hochschild homology (\cite[6.2.2]{Lod}); cyclic homology (\cite{Connes}, \cite[6.2.8]{Lod}); dihedral homology (\cite{Loday-dih}, \cite{KLS-dihed}); reflexive homology (\cite{DMG-reflexive}); symmetric homology (\cite{Ault}, \cite{Fie}); hyperoctahedral homology (\cite{DMG-HO}, \cite{Fie}); higher-order Hochschild homology (\cite{PirH}); gamma homology (\cite{PR00}); Andr\'e-Quillen homology (\cite{PirAQ}); $E_n$-homology (\cite{LR}); and Leibniz homology (\cite{HV}).   

The \emph{reflexive crossed simplicial group} and the \emph{dihedral crossed simplicial group} are two of the \emph{fundamental crossed simplicial groups}. The associated homology theories, \emph{reflexive homology} and \emph{dihedral homology}, are defined as functor homology over their respective indexing categories $\Delta R^{op}$ and $\Delta D^{op}$.

The dihedral homology theory has long been of interest for its applications to $O(2)$-equivariant homotopy theory and Hermitian algebraic $K$-theory (see \cite{Loday-dih}, \cite{KLS-dihed}, \cite{Dunn}, \cite{Lodder1}, \cite{Lodder3}, \cite{Cort1}, \cite{Cort2}, \cite{Lodder2}). By comparison, the study of the reflexive homology theory is a modern venture. Recent work of the author has studied the applications of the reflexive crossed simplicial group to $C_2$-equivariant homotopy theory and notes that the structure of the reflexive crossed simplicial group is used in the study of \emph{real} objects in algebraic topology (\cite{DMG-reflexive}).

From one point of view, reflexive homology and dihedral homology can be viewed as extensions of Hochschild homology and cyclic homology respectively, detecting the extra structure of an order-reversing involution.

The original functor homology interpretations of Hochschild homology and cyclic homology were given in terms of $\Delta^{op}$ and $\Delta C^{op}$, the categories indexing simplicial objects and cyclic objects respectively. Pirashvili and Richter (\cite{PR02}) gave an alternative functor homology interpretation of Hochschild homology and cyclic homology. They showed that Hochschild homology and cyclic homology can be expressed as functor homology over the \emph{category of non-commutative sets}, $\mathcal{F}(as)$. The category of non-commutative sets was introduced by Fe\u{\i}gin and Tsygan (\cite[A10]{FT}). It is the category that encapsulates the structure of a monoid in a symmetric monoidal category (\cite{Pir-PROP}) and it is isomorphic to $\Delta S$, the category associated to the symmetric crossed simplicial group (\cite{FL}, \cite{Ault}, \cite{Fie}).

The author introduced an extension of $\mathcal{F}(as)$, called the \emph{category of involutive non-commutative sets} and denoted $\mathcal{IF}(as)$. This category encapsulates the structure of an involutive monoid in a symmetric monoidal category (\cite{DMG-IFAS}) and is isomorphic to $\Delta H$, the category associated to the hyperoctahedral crossed simplicial group (\cite{FL}, \cite{DMG-IFAS}, \cite{DMG-HO}, \cite{Fie}).

In this paper we prove that, just as Hochschild and cyclic homology can be interpreted as functor homology over the category of non-commutative sets (equivalently, over $\Delta S$), reflexive homology and dihedral homology can be expressed as functor homology over the category of involutive non-commutative sets (equivalently, over $\Delta H$).

The paper is structured as follows.

In Sections \ref{functor-homology-sec} and \ref{crossed-simplicial-group-sec} we recall the necessary background on functor homology, crossed simplicial groups and their homology theories. In Section \ref{non-comm-sets-sec} we recall the category of involutive non-commutative sets, $\mathcal{IF}(as)$, and the subcategory of based involutive non-commutative sets, $\mathcal{I}\Gamma(as)$. In Section \ref{involutive-alg-sec}, we recall the special cases of functor homology for involutive algebras in terms of \emph{Loday functors} and \emph{bar constructions}.

In Sections \ref{Pir-Ric-sec} and \ref{Slom-Zimm-sec} we recall Pirashvili and Richter's identification of the Hochschild and cyclic homology theories as functor homology over the category of non-commutative sets. We also recall the category-theoretic framework introduced independently by S\l{}omi\'{n}ska and Zimmermann, which allows for an alternative proof of Pirashvili and Richter's result. This will be the framework we use to prove our results.

In Sections \ref{decomp-sec1} and \ref{decomp-sec2} we prove the technical results that we require for our theorems.

In Section \ref{dihedral-sec} we prove our first main theorem (Theorem \ref{dihedral-thm}) which, in the specific case of an involutive algebra (Corollary \ref{dihedral-cor}), can be stated as follows:

\begin{theorem*}
Let $A$ be an involutive, associative $k$-algebra. There exist isomorphisms of graded $k$-modules
\[HD_{\star}(A)=\mathrm{Tor}_{\star}^{\Delta D^{op}}\left(k^{\ast} , \mathcal{L}(A)\right) \cong \mathrm{Tor}_{\star}^{\ifas}\left(B, \mathsf{H}_A\right)\cong \mathrm{Tor}_{\star}^{\Delta H}\left(B, \mathsf{H}_A\right)\]
where $\mathcal{L}(A)$ is the dihedral Loday functor (also known as the dihedral bar construction); $\mathsf{H}_A$ is the hyperoctahedral bar construction; and the functor $B$ can be found in Definition \ref{functor-B-defn}.
\end{theorem*}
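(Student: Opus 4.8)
The plan is to prove a chain of two isomorphisms. The first equality, $HD_\star(A) = \mathrm{Tor}_\star^{\Delta D^{op}}(k^\ast, \mathcal{L}(A))$, should be the definition of dihedral homology as functor homology over the dihedral category, so nothing needs to be done there. The third isomorphism, $\mathrm{Tor}_\star^{\ifas}(B,\mathsf{H}_A) \cong \mathrm{Tor}_\star^{\Delta H}(B,\mathsf{H}_A)$, should follow immediately from the stated isomorphism of categories $\ifas \cong \Delta H$, which induces an equivalence of the corresponding functor categories and hence an isomorphism of the $\mathrm{Tor}$-groups (one simply transports the functors $B$ and $\mathsf{H}_A$ along the isomorphism). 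So the real content is the middle isomorphism

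\begin{equation*}
\mathrm{Tor}_\star^{\Delta D^{op}}\bigl(k^\ast, \mathcal{L}(A)\bigr) \cong \mathrm{Tor}_\star^{\ifas}\bigl(B, \mathsf{H}_A\bigr).
\end{equation*}

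My approach would be to follow the S\l{}omi\'nska--Zimmermann framework advertised in the excerpt as the engine behind the alternative proof of Pirashvili--Richter. That framework presumably produces, from a suitable functor between indexing categories, a change-of-base (Grothendieck) spectral sequence or an outright isomorphism of $\mathrm{Tor}$-groups, given that the comparison functor has acyclic comma-category fibres (a suitable cofinality/initiality condition). So first I would identify the relevant comparison functor relating $\Delta D^{op}$ and $\ifas$. The natural candidate factors through the forgetful/inclusion data packaged in $\mathsf{H}_A$: the hyperoctahedral bar construction $\mathsf{H}_A$ should restrict along this functor to recover the dihedral Loday functor $\mathcal{L}(A)$, and the functor $B$ should be built precisely so that its restriction recovers the constant/coefficient object $k^\ast$ used to compute dihedral homology. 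This is exactly the shape of the original Pirashvili--Richter comparison (cyclic homology over $\Delta C^{op}$ versus $\mathrm{Tor}$ over $\fas$), adapted by adding the involution throughout, so I expect the decomposition results of Sections \ref{decomp-sec1} and \ref{decomp-sec2} to supply the combinatorial input needed to verify the acyclicity/cofinality hypothesis.

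Concretely, the key steps in order are: (1) spell out the comparison functor $\Phi$ (and/or its opposite) between $\Delta D^{op}$ and $\ifas$, using the isomorphism $\ifas \cong \Delta H$ and the standard relationship between the dihedral and hyperoctahedral crossed simplicial groups; (2) check that $\Phi^\ast \mathsf{H}_A = \mathcal{L}(A)$ and $\Phi^\ast B = k^\ast$ (equivalently, that $B$ is the left Kan extension of $k^\ast$ along $\Phi$), so that the two $\mathrm{Tor}$-computations use compatible coefficient data; (3) verify the S\l{}omi\'nska--Zimmermann hypotheses — that the comma categories (fibres of $\Phi$) have the homology of a point, or that $\Phi$ induces an isomorphism on the relevant derived functors — which is where the decomposition lemmas enter, decomposing morphisms in $\ifas$ into an order-preserving (monoid) part and an involutive symmetry part so that the fibres collapse; and (4) invoke the resulting change-of-rings/base isomorphism to conclude the middle isomorphism, then append the trivial first equality and the $\Delta H$-transport to obtain the full chain.

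The hard part will be step (3): establishing that the comparison functor satisfies the acyclicity condition that makes the induced map on $\mathrm{Tor}$ an isomorphism rather than merely the edge map of a spectral sequence. In the Pirashvili--Richter/Loday setting this rests on a careful analysis of the fibre categories and an explicit contracting homotopy (or a demonstration that each fibre has a terminal/initial object up to homotopy); in the present involutive setting the order-reversing involution interacts nontrivially with the symmetric-group part of the morphisms, so I anticipate the genuinely new work is a decomposition lemma showing every morphism in $\ifas$ factors uniquely (or uniquely up to controlled ambiguity) as a based involutive map followed by a map lying in the image of $\Phi$. Once such a normal form is in hand — which I expect is precisely what Sections \ref{decomp-sec1} and \ref{decomp-sec2} deliver — the fibre acyclicity, and hence the desired isomorphism, should follow by the same formal machinery that proves the classical case.
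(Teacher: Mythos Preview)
Your proposal is essentially correct and follows the same approach as the paper: the middle isomorphism is obtained by applying the S\l{}omi\'nska--Zimmermann machinery to the decomposition $\ifas = \Delta D \circ \mathbf{H}^{+}$ (from Lemmata \ref{ifas-decomp-lem} and \ref{hyp-groupoid-decomp}), together with the identification $\mathsf{H}_A\circ i\circ D = \mathcal{L}(A)$. One clarification: the hypothesis of the S\l{}omi\'nska--Zimmermann result used here \emph{is} the unique factorization $\mathbf{C}=\mathbf{A}\circ\mathbf{B}$, so once the decomposition lemma is established there is no separate fibre-acyclicity check to perform---your ``hard part'' (3) and the decomposition lemma are one and the same, and the functor $B$ is constructed precisely so that the framework outputs $k^{\ast}$ on the $\Delta D$ side.
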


In Section \ref{reflexive-sec} we prove our second main theorem (Theorem \ref{reflexive-thm}) which, in the specific case of an involutive algebra (Corollary \ref{reflexive-cor}), can be stated as follows:

\begin{theorem*}
Let $A$ be an involutive, associative $k$-algebra and let $M$ be an involutive $A$-bimodule. There is an isomorphism of graded $k$-modules
\[HR_{\star}\left(A,M\right) =\mathrm{Tor}_{\star}^{\Delta R^{op}}\left(k^{\ast}, \mathcal{L}(A,M)\right) \cong  \mathrm{Tor}_{\star}^{\mathcal{I}\Gamma(as)}\left(B^{\prime} , \mathsf{R}(A,M)\right)\]
where $\mathcal{L}(A,M)$ is the reflexive Loday functor; $\mathsf{R}(A,M)$ is a restriction of the hyperoctahedral bar construction to the subcategory $\mathcal{I}\Gamma(as)$; and $B^{\prime}$ is the restriction of the functor $B$ to the subcategory $\mathcal{I}\Gamma(as)$.
\end{theorem*}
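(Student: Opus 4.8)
The plan is to reproduce, in the based and reflexive setting, the argument used for the dihedral theorem (Theorem \ref{dihedral-thm}), specialising the S\l{}omi\'{n}ska--Zimmermann machinery of Section \ref{Slom-Zimm-sec} to the subcategory $\mathcal{I}\Gamma(as) \subseteq \ifas$ and to the reflexive crossed simplicial group $\Delta R \subseteq \Delta D$. The inclusion of crossed simplicial groups $\Delta R \hookrightarrow \Delta H$ together with the isomorphism $\Delta H \cong \ifas$ provides a comparison functor $\iota\colon \Delta R^{op} \to \mathcal{I}\Gamma(as)$ (the based, reflexive analogue of the functor $\Delta D^{op}\to\ifas$ used for Theorem \ref{dihedral-thm}); passing to the based subcategory is precisely what accommodates the bimodule coefficient $M$, since the distinguished basepoint slot is where $M$ sits, exactly as the based category governs Hochschild-type homology with bimodule coefficients in the Pirashvili--Richter picture. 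Once $\iota$ is fixed, the statement reduces to two identifications of coefficient functors together with an application of base change for $\mathrm{Tor}$.

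First I would verify that the reflexive Loday functor is the restriction of the hyperoctahedral bar construction along $\iota$, that is $\mathcal{L}(A,M) \cong \iota^{\ast}\mathsf{R}(A,M)$ as functors on $\Delta R^{op}$. Both of these functors are described explicitly in Section \ref{involutive-alg-sec}, so this is a direct object-by-object and morphism-by-morphism comparison: on objects both send $[n]$ to $M \otimes A^{\otimes n}$ with $M$ occupying the basepoint slot, and on morphisms one checks that the generating faces, degeneracies and the reflection operator of $\Delta R$ are carried to the corresponding structure maps of $\mathsf{R}(A,M)$. The only delicate point is the reflection ($C_2$) operator, whose compatibility consumes the hypotheses that $A$ is involutive and that $M$ is an involutive bimodule, via the order-reversing involutions on $A$ and on $M$.

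Second, and this is where I expect the real work to lie, I would compute the derived left Kan extension $\mathbb{L}\,\mathrm{Lan}_{\iota}\,k^{\ast}$ of the trivial coefficient functor along $\iota$ and show that it is concentrated in homological degree zero, where it equals $B^{\prime}$. Equivalently, one must show that the comma categories attached to $\iota$ over each object of $\mathcal{I}\Gamma(as)$ are homologically trivial in positive degrees, so that the higher derived Kan extensions vanish. This is exactly the content of the decomposition results of Sections \ref{decomp-sec1} and \ref{decomp-sec2}, and the task is to confirm that those results restrict correctly to the based subcategory $\mathcal{I}\Gamma(as)$ and that the extra basepoint tracking $M$ does not disturb the degeneration. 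Because the reflexive group is merely $C_2$, the fibres here are comparatively simple, and I expect the degree-zero term to assemble precisely into the restriction $B^{\prime}$ of $B$.

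With these two facts in hand the conclusion is formal. The base-change isomorphism for $\mathrm{Tor}$ along $\iota$ gives
\[\mathrm{Tor}_{\star}^{\Delta R^{op}}\left(k^{\ast}, \iota^{\ast}\mathsf{R}(A,M)\right) \cong \mathrm{Tor}_{\star}^{\mathcal{I}\Gamma(as)}\left(\mathbb{L}\,\mathrm{Lan}_{\iota}\,k^{\ast}, \mathsf{R}(A,M)\right),\]
and substituting $\iota^{\ast}\mathsf{R}(A,M) \cong \mathcal{L}(A,M)$ on the left and $\mathbb{L}\,\mathrm{Lan}_{\iota}\,k^{\ast} \simeq B^{\prime}$ on the right yields the displayed isomorphism, the first equality being the definition of reflexive homology. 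The hard part will be the vanishing of the higher Kan extensions in the based involutive setting; should the restriction of the decomposition results to $\mathcal{I}\Gamma(as)$ not be immediate, the fall-back is to re-run the homological argument of Section \ref{Slom-Zimm-sec} directly over $\mathcal{I}\Gamma(as)$, checking the required acyclicity of the reflexive fibres by hand.
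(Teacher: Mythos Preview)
Your proposal is correct and follows essentially the same route as the paper: identify $\iota^{\ast}\mathsf{R}(A,M)$ with $\mathcal{L}(A,M)$, then invoke the S\l{}omi\'{n}ska--Zimmermann machinery over $\mathcal{I}\Gamma(as)$ via the decomposition results of Section~\ref{decomp-sec2}. Your anticipated ``hard part'' --- the vanishing of higher derived Kan extensions --- is not a separate obstacle: once Lemma~\ref{I-gamma-as-decomp-lem} gives the unique factorization $\mathcal{I}\Gamma(as)=\Delta R^{op}\circ\mathbf{H}^{+}$ with $\mathbf{H}^{+}$ a groupoid, \cite[1.1]{Slom} or \cite[2.7]{Zimm} delivers the $\mathrm{Tor}$ isomorphism with coefficient $B^{\prime}$ directly, so no further acyclicity check is needed.
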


\subsection*{Acknowledgements}
I would like to thank Jake Saunders, Ieke Moerdijk and Sarah Whitehouse for interesting and helpful conversations whilst writing this paper. I would like to thank Andrew Fisher, Jack Davidson and James Brotherston for their thoughts and comments on an earlier version of this document.

\subsection*{Conventions}
Throughout the paper, we will let $k$ be a unital, commutative ring, $\mathbf{Mod}_k$ be the category of $k$-modules and $\otimes$ denote the tensor product of $k$-modules. All $k$-algebras in this paper will be assumed to be unital. For $n\geqslant 0$, we will let $[n]$ denote the set $\llb 0,1\dotsc , n\rrb$.

\section{Functor homology}
\label{functor-homology-sec}

We begin by recalling the categories of left and right modules over a small category. We also recall the tensor product of $\mathbf{C}$-modules and its left-derived functors (see \cite[Section 3]{MCM}, \cite[Section 1.6]{PR02} for instance).

\begin{defn}
For a small category $\mathbf{C}$ we define $\mathbf{CMod}=\mathrm{Fun}\left(\mathbf{C}, \mathbf{Mod}_k\right)$ and $\mathbf{ModC}=\mathrm{Fun}\left(\mathbf{C}^{op}, \mathbf{Mod}_k\right)$. 
\end{defn}

\begin{defn}
For a small category $\mathbf{C}$, we define the $k$-constant functor, $k^{\ast}$ in $\mathbf{CMod}$ to be the functor that sends every object of $\mathbf{C}$ to $k$ and every morphism in $\mathbf{C}$ to the identity map on $k$.
\end{defn}

\begin{defn}
\label{tensor-defn}
There is a tensor product
\[-\otimes_{\mathbf{C}} - \colon \mathbf{ModC} \times \mathbf{CMod} \rightarrow \mathbf{Mod}_k\]
defined as the coend
\[G\otimes_{\mathbf{C}} F = \int^{C\in \mathrm{Ob}(\mathbf{C})} G(C) \otimes F(C).\]

The left derived functors of the tensor product $-\otimes_{\mathbf{C}} -$ are denoted by $\mathrm{Tor}_{\star}^{\mathbf{C}}\left(-,-\right)$. 
\end{defn}

\section{Crossed simplicial groups}
\label{crossed-simplicial-group-sec}
We begin by recalling some notation for certain groups that we will use throughout the paper. We will recall the notion of crossed simplicial group together with examples. We will recall the functor homology definitions of reflexive homology, dihedral homology and hyperoctahedral homology.

\subsection{A gathering of groups}
\label{groups-subsec}

\begin{defn}
\label{groups-defn}
We define the groups that we will use throughout the paper. Let $n\geqslant 0$.
\begin{itemize}
\item The \emph{reflexive group} is defined by  
\[R_{n+1}=\llangle r_{n+1}\mid r_{n+1}^2=1\rrangle.\]
The generator $r_{n+1}$ acts on $[n]$ by $r_{n+1}(i)=n-i$.
\item Let 
\[C_{n+1}=\llangle t_{n+1} \mid t_{n+1}^{n+1}=1\rrangle\]
denote the \emph{cyclic group of order $n+1$}. We note that $C_{n+1}$ acts on the set $[n]$ as follows: $t_{n+1}(i)=i+1$ for $0\leqslant i \leqslant n-1$ and $t_{n+1}(n)=0$.
\item Let  
\[D_{n+1}=\llangle r_{n+1},\,t_{n+1} \mid r_{n+1}^2=t_{n+1}^{n+1}=1,\, r_{n+1}t_{n+1}r_{n+1}=t_{n+1}^{-1}\rrangle.\]
denote the \emph{dihedral group}. We note that $D_{n+1}$ acts on the set $[n]$. The generator $t_{n+1}$ acts as for the cyclic group $C_{n+1}$. The generator $r_{n+1}$ acts as for $R_{n+1}$.
\item We denote by $\Sigma_{n+1}$ the \emph{symmetric group} on the set $[n]$. 
\item Let $\Sigma_{n+1}^{+}$ be the subgroup of permutations which fix $0$.
\item The \emph{hyperoctahedral group} $H_{n+1}$ is the semi-direct product $C_2^{n+1} \rtimes \Sigma_{n+1}$ where $\Sigma_{n+1}$ acts on $C_2^{n+1}$ by permuting the factors. 
\item Let $H_{n+1}^{+}$ denote the subgroup of $H_{n+1}$ consisting of elements $(z_0,\, z_1,\dotsc,z_n;\sigma)$ such that $z_0=1\in C_2$ and $\sigma \in \Sigma_{n+1}^{+}$.
\end{itemize}
\end{defn}

The following lemma gives us a specific instance of the dihedral group inside the hyperoctahedral group. This will be of use for our technical results in Section \ref{decomp-sec1}.

\begin{lem}
\label{dihedral-subgroup-lem}
The elements $R=\left(t_2,\dotsc, t_2;r_{n+1}\right)$ and $T=\left(1,\dotsc , 1; t_{n+1}\right)$ generate a subgroup of $H_{n+1}$ isomorphic to $D_{n+1}$.
\end{lem}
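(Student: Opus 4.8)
The plan is to exhibit the isomorphism by first checking that $R$ and $T$ satisfy the defining relations of $D_{n+1}$, which produces a surjective homomorphism $D_{n+1}\twoheadrightarrow\langle R,T\rangle$, and then to upgrade this surjection to an isomorphism by comparing orders. All of the computations are controlled by one simple observation, which I would record at the outset alongside the multiplication rule in the semidirect product $H_{n+1}=C_2^{n+1}\rtimes\Sigma_{n+1}$: the $C_2$-vector $z=(t_2,\dotsc,t_2)$ appearing in $R$ is the constant (diagonal) vector, and is therefore fixed by every element of $\Sigma_{n+1}$ acting by permutation of factors. This makes each relation fall out cleanly and essentially independently of the chosen semidirect-product convention.

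First I would verify the three relations. For $T^{n+1}$ the $\Sigma_{n+1}$-coordinate is $t_{n+1}^{n+1}=1$ while the $C_2$-coordinates stay trivial throughout, so $T^{n+1}=1$. For $R^2$, using that $z$ is fixed by $r_{n+1}$, the $C_2$-coordinate becomes $z\cdot z=(1,\dotsc,1)$ and the $\Sigma_{n+1}$-coordinate is $r_{n+1}^2=1$, giving $R^2=1$. For the braid-type relation I would compute $RT$ and then $(RT)R$; once more $z$ is fixed by the relevant permutations, so the $C_2$-coordinate collapses to the identity, and the $\Sigma_{n+1}$-coordinate is $r_{n+1}t_{n+1}r_{n+1}=t_{n+1}^{-1}$, which holds because $r_{n+1}$ and $t_{n+1}$ realise the dihedral group as permutations of $[n]$ (Definition \ref{groups-defn}). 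This yields $RTR^{-1}=T^{-1}$, and hence the desired surjection.

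To finish I would establish injectivity, equivalently that $\lvert\langle R,T\rangle\rvert=2(n+1)$. The cleanest route is to post-compose the surjection with the projection $\pi\colon H_{n+1}\to\Sigma_{n+1}$; the composite sends $r\mapsto r_{n+1}$ and $t\mapsto t_{n+1}$, that is, it is the standard dihedral action on the $n+1$ vertices $[n]$, which is faithful. An injective composite forces the first map to be injective, so the surjection is an isomorphism.

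The main obstacle is precisely this injectivity step, not the relation-checking: the relations alone only give a surjection, and one must rule out a collapse onto a proper quotient. The projection-and-faithfulness argument is clean for $n+1\geqslant 3$, but in the degenerate cases $n+1\in\{1,2\}$ the dihedral action on $[n]$ is not faithful (for $n+1=2$ one even has $r_2=t_2$ in $\Sigma_2$), so $\pi$ loses information and these cases must be handled directly, most easily by listing the at most four resulting elements and distinguishing them via their $C_2$-coordinates. Alternatively, one can avoid the case split altogether by replacing $\pi$ with the faithful signed-permutation representation of $H_{n+1}$, in which the distinctness of the $2(n+1)$ elements can be read off uniformly.
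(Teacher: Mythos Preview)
Your relation-checking argument is essentially identical to the paper's: the paper simply computes $R^2=1$, $T^{n+1}=1$, and $RTR=T^{-1}$, exploiting exactly your observation that the constant $C_2$-vector $(t_2,\dotsc,t_2)$ is fixed by every permutation. That is the entirety of the paper's proof.

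The paper does not address injectivity at all; it stops once the relations are verified. Your additional step---projecting onto $\Sigma_{n+1}$ and invoking faithfulness of the dihedral action on $[n]$ for $n+1\geqslant 3$, with a direct check in the degenerate cases $n+1\in\{1,2\}$---is a genuine and correct addition. Strictly speaking, the relations alone only give a surjection $D_{n+1}\twoheadrightarrow\langle R,T\rangle$, so your argument closes a gap the paper leaves open. Your caution about the small cases is well placed: for $n+1=2$ the projection collapses $R$ and $T$ to the same transposition, and it is only the $C_2$-coordinates that distinguish the four elements of $\langle R,T\rangle$.
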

\begin{proof}
We see that $R^2=\left(t_2,\dotsc, t_2;r_{n+1}\right)^2=1=\left(1,\dotsc , 1; t_{n+1}\right)^{n+1} =T^{n+1}$. We also see that 
\[RTR=\left(t_2^2,\dotsc ,t_2^2;r_{n+1}t_{n+1}r_{n+1}\right)=\left(1,\dotsc, 1;t_{n+1}^{-1}\right)=T^{-1}\]
which completes the proof.
\end{proof}

\subsection{Crossed simplicial groups}

The theory of crossed simplicial groups was developed independently by Fiedorowicz and Loday (\cite{FL}) and Krasauskas (\cite{Kras-skew}), motivated by Connes' construction of cyclic homology (\cite{Connes}).

\begin{defn}
The category $\Delta$ has as objects the sets $[n]$ for $n\geqslant 0$, with order-preserving maps as morphisms.
\end{defn}

We recall, from \cite[1.1]{FL}, that a family of groups $\llb G_n\rrb$ for $n\geqslant 0$ is called a \emph{crossed simplicial group} if there is a category $\Delta G$ such that the objects are the sets $[n]$; $\Delta$ is a subcategory of $\Delta G$; $\mathrm{Aut}_{\Delta G}([n])=G_n$; and any morphism $[n]\rightarrow [m]$ in $\Delta G$ can be uniquely written as a pair $\left(\phi , g\right)$ with $g\in G_n$ and $\phi\colon [n]\rightarrow [m]$ in $\Delta$.

\begin{eg}
\label{CSG-egs}
We recall the \emph{fundamental crossed simplicial groups}. 
\begin{itemize}
\item If we take the trivial group for each $n$, we recover the category $\Delta$.
\item If we take the family of reflexive groups, $\llb R_{n+1}\rrb$ (see \cite[Example 2]{FL} and \cite[1.1]{DMG-reflexive}), we obtain the \emph{reflexive category} $\Delta R$, (see \cite[1.2]{DMG-reflexive}).
\item If we take the family of cyclic groups, $\llb C_{n+1} \rrb$ (see \cite[Example 4]{FL}), we obtain the \emph{cyclic category} $\Delta C$, which is the same as Connes' category $\Lambda$ (\cite{Connes}).
\item If we take the family of dihedral groups, $\llb D_{n+1}\rrb$ (see \cite[Example 4]{FL}), we obtain the \emph{dihedral category} $\Delta D$, (see \cite[Section 3]{Loday-dih}, and also \cite[1.1]{KLS-dihed}, where this category is denoted by $\Xi$).
\item If we take the family of symmetric groups, $\llb \Sigma_{n+1}\rrb$ (see \cite[Example 6]{FL}), we obtain the \emph{symmetric category} $\Delta S$ (see \cite[1.1]{Ault}).
\item If we take the family of hyperoctahedral groups, $\llb H_{n+1}\rrb$ (see \cite[Example 6]{FL}), we obtain the \emph{hyperoctahedral category} $\Delta H$ (see \cite[1.2]{DMG-HO}).
\end{itemize}
\end{eg}

\subsection{Functor homology theories}

Given a crossed simplicial group, we have an associated functor homology theory. In this subsection we recall the definition of reflexive homology from \cite{DMG-reflexive}; the definition of dihedral homology from \cite{Loday-dih} and \cite{KLS-dihed}; and the definition of hyperoctahedral homology from \cite{Fie} and \cite{DMG-HO}. 

\begin{defn}
Let $F\colon \Delta R^{op}\rightarrow \mathbf{Mod}_k$. The \emph{reflexive homology of $F$} is defined to be
\[HR_{\star}(F)=\mathrm{Tor}_{\star}^{\Delta R^{op}}\left(k^{\ast} , F\right).\]
\end{defn}

\begin{defn}
Let $F\colon \Delta D^{op}\rightarrow \mathbf{Mod}_k$. The \emph{dihedral homology of $F$} is defined to be
\[HD_{\star}(F)=\mathrm{Tor}_{\star}^{\Delta D^{op}}\left(k^{\ast} , F\right).\]
\end{defn}

It was shown by Fiedorowicz and Loday \cite[6.16]{FL} that the functor homology theory for $\Delta H^{op}$ is isomorphic to Hochschild homology. However, the functor homology theory associated to $\Delta H$ is very different. As shown in previous work of the author (\cite{DMG-HO}, \cite{DMG-e-inf}), inspired by Fiedorowicz's preprint (\cite{Fie}), the functor homology theory for $\Delta H$ has applications in $C_2$-equivariant stable homotopy theory. Whilst hyperoctahedral homology does not appear directly in this paper, functor homology over the category $\Delta H$ does, so we include the definition for reference.

\begin{defn}
Let $F\colon \Delta H\rightarrow \mathbf{Mod}_k$. The \emph{hyperoctahedral homology of $F$} is defined to be 
\[HO_{\star}(F)=\mathrm{Tor}_{\star}^{\Delta H}(k^\ast, F).\]
\end{defn}

\subsection{A gathering of groupoids}

We will want to consider crossed simplicial groups not only as a family of groups but also as a groupoid. We do this in the natural way: the objects are the sets $[n]$ for $n\geqslant 0$ and the automorphisms are the group elements in degree $n$. Recall the groups defined in Subsection \ref{groups-subsec}.

\begin{defn}
\label{groupoids-defn}
We will make use of the following groupoids.
\begin{itemize}
\item Let $\mathbf{H}$ denote the \emph{groupoid of hyperoctahedral groups}. The objects are the sets $[n]$ and $\mathrm{Hom}_{\mathbf{H}}\left([n],[n]\right)=H_{n+1}$.
\item Let $\mathbf{H}^{+}$ denote the sub-groupoid of $\mathbf{H}$ such that $\mathrm{Hom}_{\mathbf{H}^{+}}\left([n],[n]\right)=H_{n+1}^{+}$.
\item Let $\mathbf{D}$ denote the sub-groupoid of $\mathbf{H}$ such that $\mathrm{Hom}_{\mathbf{D}}\left([n],[n]\right)$ is the dihedral subgroup of $H_{n+1}$ generated by the elements $R$ and $T$ of Lemma \ref{dihedral-subgroup-lem}.
\item Let $\mathbf{R}$ denote the sub-groupoid of $\mathbf{H}$ such that $\mathrm{Hom}_{\mathbf{R}}\left([n],[n]\right)$ is the reflexive subgroup of $H_{n+1}$ generated by the element $R$ of Lemma \ref{dihedral-subgroup-lem}.
\end{itemize}
\end{defn}

\section{Involutive non-commutative sets}
\label{non-comm-sets-sec}

We recall the categories of non-commutative sets and involutive non-commutative sets together with the subcategories of based non-commutative sets and based, involutive, non-commutative sets.

\subsection{The category of non-commutative sets}

\begin{defn}
The \emph{category of non-commutative sets}, $\fas$, has as objects the sets $[n]$ for $n\geqslant 0$. An element $f\in\mathrm{Hom}_{\fas}([n],[m])$ is a map of finite sets such that $f^{-1}(i)$ is a totally ordered set for each $i\in [m]$. Given composable morphisms $f\in\mathrm{Hom}_{\fas}([n],[m])$ and $g\in\mathrm{Hom}_{\fas}([m],[k])$, the composite is given by the underlying map of finite sets $g\circ f$, together with the preimage data given by the ordered disjoint union:
\[\left(g\circ f\right)^{-1}(i) = \coprod_{j \in g^{-1}(i)} f^{-1}(j).\]
\end{defn}

\begin{defn}
The \emph{category of based non-commutative sets}, $\Gamma(as)$, is the subcategory of $\fas$ with the same set of objects and whose morphisms satisfy $f(0)=0$.
\end{defn}

\begin{rem}
We note that the version of the category $\Gamma(as)$ that we use here has a total ordering on the preimage of the basepoint. There are versions of this category that forgo a total-ordering on the preimage of the basepoint. See the paper of Hartl, Pirashvili and Vespa \cite[Example 2.11]{HPV} for example.
\end{rem}

\subsection{The category of involutive non-commutative sets}

\begin{defn}
A \emph{finite, totally ordered $C_2$-set} consists of a finite set whose elements have a total ordering together with a superscript label from the group $C_2$ for each element.
\end{defn}

\begin{defn}
\label{action-defn}
Let $t$ denote the generator of $C_2$. We define an action of $C_2$ on finite, totally-ordered $C_2$-sets by
\[ t \ast \llb j_1^{\alpha_{j_1}}<\cdots <j_r^{\alpha_{j_r}}\rrb=\llb j_r^{t\alpha_{j_r}}<\cdots <j_1^{t\alpha_{j_1}}\rrb.\]
Explicitly, we invert the ordering and multiply each label by $t\in C_2$.
\end{defn}

\begin{defn}
The \emph{category of involutive, non-commutative sets}, $\ifas$, has as objects the sets $[n]$ for $n\geqslant 0$. An element $f\in\mathrm{Hom}_{\ifas}([n],[m])$ is a map of finite sets such that $f^{-1}(i)$ is a totally ordered $C_2$-set for each $i\in [m]$. Given composable morphisms $f\in\mathrm{Hom}_{\ifas}([n],[m])$ and $g\in\mathrm{Hom}_{\ifas}([m],[k])$, the composite is given by the underlying map of finite sets $g\circ f$, together with the preimage data given by the ordered disjoint union:
\[\left(g\circ f\right)^{-1}(i) = \coprod_{j^\alpha \in g^{-1}(i)} \alpha \ast f^{-1}(j)\]
with the action from Definition \ref{action-defn}.
\end{defn}

\begin{rem}
We recall from \cite[1.7]{DMG-HO} (see also \cite[3.11]{DMG-IFAS}) that there is an isomorphism of categories $\Delta H \cong \mathcal{IF}(as)$.
\end{rem}

\begin{defn}
The \emph{category of based, involutive, non-commutative sets}, $\mathcal{I}\Gamma(as)$, is the subcategory of $\ifas$ with the same objects and whose morphisms satisfy $f(0)=0$.
\end{defn}

\section{Involutive algebras, Loday functors and bar constructions}
\label{involutive-alg-sec}

We recall the definition of an involutive $k$-algebra. We also recall the definition of an involutive bimodule (see \cite[5.2.1]{Lod} for instance). We recall the Loday functors for the reflexive homology and dihedral homology of an involutive $k$-algebra. We also recall the hyperoctahedral bar construction, used to define the hyperoctahedral homology of an involutive $k$-algebra.  

\subsection{Involutive algebras}

\begin{defn}
An \emph{involutive $k$-algebra} consists of an associative $k$-algebra $A$ together with an order-reversing $k$-algebra automorphism $i\colon A\rightarrow A$ which squares to the identity. This is commonly written as a map $a \mapsto \overline{a}$ satisfying the following conditions for $a,\,b \in A$ and $\lambda \in k$:
\begin{multicols}{2}
\begin{itemize}
\item $\overline{a+b}=\overline{a} +\overline{b}$;
\item $\overline{ab}=\overline{b}\, \overline{a}$;
\item $\overline{\overline{a}}=a$;
\item $\overline{\lambda}=\lambda$.
\end{itemize}
\end{multicols}
\end{defn}

\begin{defn}
Let $A$ be an involutive $k$-algebra. An \emph{involutive $A$-bimodule} is an $A$-bimodule $M$ equipped with a map $m\mapsto \ol{m}$ such that $\ol{a_1ma_2}= \ol{a_2}\, \ol{m}\, \ol{a_1}$ for $a_1$, $a_2\in A$.
\end{defn}

\subsection{Loday functors and homology theories for involutive algebras}

\begin{defn}
\label{Loday-func-Hochschild-defn}
For an associative $k$-algebra $A$ and an $A$-bimodule $M$, the \emph{Loday functor} is the  simplicial $k$-module 
\[\mathcal{L}(A,M)\colon \Delta^{op}\rightarrow \mathbf{Mod}_k\]
given on objects by $[n]\mapsto M\otimes A^{\otimes n}$ and determined on morphisms by 
\[\partial_i\left(m\otimes a_1\otimes \cdots \otimes a_n\right) =
\begin{cases}
\left( ma_1\otimes a_2\otimes \cdots \otimes a_n\right) & i=0\\
\left(m\otimes a_1\otimes \cdots \otimes a_ia_{i+1}\otimes \cdots \otimes a_n\right) & 1\leqslant i \leqslant n-1\\
\left(a_nm\otimes a_1\otimes \cdots \otimes a_{n-1}\right) & i=n
\end{cases}
\]
and
\[s_j\left(m\otimes a_1\otimes \cdots \otimes a_n\right) = 
\begin{cases}
\left(m\otimes 1_A \otimes a_1\otimes \cdots \otimes a_n\right) &j=0\\
\left(m\otimes a_1\otimes \cdots\otimes a_j \otimes 1_A \otimes a_{j+1} \otimes \cdots \otimes a_n\right) & j\geqslant 1.
\end{cases}
\]
\end{defn}

Following \cite[1.8, 1.9]{DMG-reflexive} we can extend the Loday functor to a functor on $\Delta R^{op}$ and therefore define the reflexive homology of an involutive algebra with coefficients in an involutive bimodule. (We note that this functor was denoted by $\mathcal{L}^{+}(A,M)$ and the homology was denoted by $HR_{\star}^{+}(A,M)$ in that paper.)

\begin{defn}
\label{loday-defn}
Let $A$ be an involutive $k$-algebra and let $M$ be an involutive $A$-bimodule. We extend the Loday functor $\mathcal{L}(A,M)$ to a functor 
\[\mathcal{L}\left(A,M\right)\colon \Delta R^{op}\rightarrow \mathbf{Mod}_k\] by defining
\[r_{n+1}\left(m\otimes a_1\otimes \cdots \otimes a_n\right) = \left(\ol{m}\otimes \ol{a_n}  \otimes \cdots \otimes \ol{a_1}\right).\]
We denote the \emph{reflexive homology of $A$ with coefficients in $M$} by
\[HR_{\star}(A,M)=\mathrm{Tor}_{\star}^{\Delta R^{op}}\left(k^{\ast},\mathcal{L}(A,M)\right).\]
\end{defn}

We can extend the Loday functor for $\Delta R^{op}$ to a functor on $\Delta D^{op}$. This first appeared in \cite[Section 2]{Loday-dih} and \cite[Section 1]{KLS-dihed} (see also \cite[1.4]{Lodder1}, \cite[Section 2]{Dunn} for example) and is also sometimes called the \emph{dihedral bar construction}. We note that in this case we have introduced cycles so we must have $M=A$ and we write $\mathcal{L}(A)$ rather than $\mathcal{L}(A,A)$.

\begin{defn}
\label{dihedral-loday}
Let $A$ be an involutive $k$-algebra. We extend the Loday functor $\mathcal{L}(A)$ from $\Delta R^{op}$ to a functor 
\[\mathcal{L}\left(A\right)\colon \Delta D^{op}\rightarrow \mathbf{Mod}_k\] by defining
\[t_{n+1}\left(a_0\otimes a_1\otimes \cdots \otimes a_n\right) = \left(a_n\otimes a_0 \otimes \cdots \otimes a_{n-1}\right).\]
We denote the \emph{dihedral homology of $A$} by
\[HD_{\star}(A)=\mathrm{Tor}_{\star}^{\Delta D^{op}}\left(k^{\ast},\mathcal{L}(A)\right).\]
\end{defn}

\subsection{The hyperoctahedral bar construction}

\begin{defn}
\label{hyperoctahedral-bar-defn}
Let $A$ be an involutive $k$-algebra. The \emph{hyperoctahedral bar construction}
\[\mathsf{H}_A\colon \Delta H\rightarrow \mathbf{Mod}_k\]
is given on objects by $[n]\mapsto A^{\otimes n+1}$. Let $(\phi ,g )\in \mathrm{Hom}_{\Delta H}\left([n],[m]\right)$ (so $\phi \in \mathrm{Hom}_{\Delta}\left([n] , [m]\right)$ and $g=\left( z_0,\dotsc , z_n; \sigma\right) \in H_{n+1}$). We define $\mathsf{H}_A\left(\phi ,g\right)$ to be determined $k$-linearly by
\[\mathsf{H}_A\left(\phi ,g\right)(a_0\otimes \cdots \otimes a_n)=b_0\otimes \cdots \otimes b_m,\]
where 
\[b_i=\prod_{j\in (\phi\circ \sigma)^{-1}(i)} a_j^{z_j}\]
where the product is ordered according to the map $\phi$ and
\[a_j^{z_j}=
\begin{cases}
a_j & z_j=1\\
\ol{a_j} & z_j=t_2.
\end{cases}\]
Note that an empty product is defined to be the multiplicative unit $1_A$.
\end{defn}

\subsection{Bar constructions on categories of non-commutative sets}

\begin{defn}
Let $A$ be an involutive, associative $k$-algebra.  By abuse of notation, we will write $\mathsf{H}_A\colon \mathcal{IF}(as)\rightarrow \mathbf{Mod}_k$ for the functor obtained by pre-composing the hyperoctahedral bar construction with the isomorphism of \cite[1.7]{DMG-HO}. In this setting, the functor $\mathsf{H}_A$ is defined as follows. On objects we define $\mathsf{H}_A([n])=A^{\otimes n+1}$.

Let $a_0\otimes \cdots \otimes a_n \in A^{\otimes n+1}$. Let $f\colon [n]\rightarrow [m]$ in $\mathcal{IF}(as)$. We define $\mathsf{H}_A(f)$ to be determined $k$-linearly by
\[\mathsf{H}_A(f)\left(a_0\otimes \cdots \otimes a_n\right) = b_0\otimes \cdots \otimes b_m,\]
where 
\[b_i=\prod_{j\in f^{-1}(i)} a_j^{z_j}\]
with the product ordered according to the total ordering on $f^{-1}(i)$ and $a_j^{z_j}$ given as in Definition \ref{hyperoctahedral-bar-defn}. 
\end{defn}

\begin{defn}
Let $A$ be an involutive, associative $k$-algebra and let $M$ be an involutive $A$-bimodule. We define a functor $\mathsf{R}(A,M)\colon \mathcal{I}\Gamma(as) \rightarrow \mathbf{Mod}_k$ as follows. On objects we define $\mathsf{R}(A,M)([n])=M\otimes A^{\otimes n}$.

Let $a_0\otimes \cdots \otimes a_n \in M\otimes A^{\otimes n}$. In particular, note that $a_0\in M$. Let $f\colon [n]\rightarrow [m]$ in $\mathcal{I}\Gamma(as)$. We define $\mathsf{R}(A,M)(f)$ to be determined $k$-linearly by
\[\mathsf{R}(A,M)(f)\left(a_0\otimes \cdots \otimes a_n\right) = b_0\otimes \cdots \otimes b_m,\]
where 
\[b_i=\prod_{f^{-1}(i)} a_j^{z_i}\]
with the product ordered according to the total ordering on $f^{-1}(i)$ and $a_j^{z_j}$ given as in Definition \ref{hyperoctahedral-bar-defn}.
\end{defn}

\begin{rem}
We observe that if we take $M=A$, then the functor $\mathsf{R}(A,A)$ is the restriction of the hyperoctahedral bar construction $\mathsf{H}_A$ to the subcategory $\mathcal{I}\Gamma(as)\subset \mathcal{IF}(as)$. 
\end{rem}

\section{The results of Pirashvili and Richter}
\label{Pir-Ric-sec}

We give a slight reformulation of Pirashvili and Richter's theorem.

\begin{thm}[{{\cite[1.3]{PR02}}}]
For functors $F\colon \Gamma(as)\rightarrow \mathbf{Mod}_k$ and $T\colon \mathcal{F}(as)\rightarrow \mathbf{Mod}_k$, there are isomorphisms of graded $k$-modules
\[\mathrm{Tor}_{\star}^{\Delta C^{op}}\left(k^{\ast} , T\circ i_1\right) \cong \mathrm{Tor}_{\star}^{\mathcal{F}(as)}\left(b , T\right)\quad \text{and} \quad \mathrm{Tor}_{\star}^{\Delta^{op}}\left(k^{\ast} , F\circ i_2\right) \cong \mathrm{Tor}_{\star}^{\Gamma(as)}\left(\overline{b} , F\right),\]
where $i_1\colon \Delta C^{op}\rightarrow \mathcal{F}(as)$ and $i_2\colon \Delta^{op}\rightarrow \Gamma(as)$ are inclusions of subcategories; the functor $b$ was defined in \cite[1.7]{PR02} and $\overline{b}$ is the restriction of $b$ to the subcategory $\Gamma(as)$.
\end{thm}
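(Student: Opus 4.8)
The plan is to recognise both isomorphisms as instances of a single base-change principle for $\mathrm{Tor}$ over small categories, applied to the two inclusions $i_1\colon \Delta C^{op}\hookrightarrow \fas$ and $i_2\colon \Delta^{op}\hookrightarrow \Gamma(as)$. Write $i\colon \mathbf{C}\to\mathbf{D}$ for either inclusion. Restriction along $i$ is the exact functor $i^{\ast}\colon F\mapsto F\circ i$ on the relevant functor categories, and it has a left adjoint $i_{!}$ given by left Kan extension. The first step is to pin down the coefficient functor: I would show that the $b$ of \cite[1.7]{PR02} is the left Kan extension $(i_1)_{!}k^{\ast}$ of the constant right module along $i_1$, and that $\overline{b}$ is $(i_2)_{!}k^{\ast}$ (equivalently, that $(i_2)_{!}k^{\ast}$ agrees with the restriction $b|_{\Gamma(as)}$, a compatibility I would verify directly from the combinatorial description of $b$). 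Granting this, the adjunction $i_{!}\dashv i^{\ast}$ yields a projection formula, provable as a routine manipulation of coends,
\[
(i_{!}k^{\ast})\otimes_{\mathbf{D}} T \;\cong\; k^{\ast}\otimes_{\mathbf{C}}(T\circ i),
\]
which already recovers the asserted isomorphism on $\mathrm{Tor}_{0}$ in both cases.

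To upgrade this to all of $\mathrm{Tor}_{\star}$ I would resolve the variable functor rather than the coefficient. Choose a projective resolution $P_{\bullet}\to T$ by left $\mathbf{D}$-modules, so that $\mathrm{Tor}_{\star}^{\mathbf{D}}(i_{!}k^{\ast},T)=H_{\star}\big((i_{!}k^{\ast})\otimes_{\mathbf{D}}P_{\bullet}\big)$. Applying the projection formula degreewise identifies this complex with $k^{\ast}\otimes_{\mathbf{C}}(i^{\ast}P_{\bullet})$. Because $i^{\ast}$ is exact, $i^{\ast}P_{\bullet}\to T\circ i$ is still a resolution; restriction need not preserve projectives, so rather than transporting a projective resolution directly I would check that the terms $i^{\ast}P_{j}$ are acyclic for $k^{\ast}\otimes_{\mathbf{C}}-$, which suffices for this resolution to compute the derived functor. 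Since every $P_{j}$ is a summand of a sum of representables $k[\mathrm{Hom}_{\mathbf{D}}(d,-)]$, and $k^{\ast}\otimes_{\mathbf{C}}-$ computes the colimit over $\mathbf{C}$ (so that $\mathrm{Tor}_{\star}^{\mathbf{C}}(k^{\ast},-)$ computes its higher colimits), it is enough to treat a single restricted representable $k[\mathrm{Hom}_{\mathbf{D}}(d,i(-))]$.

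The crux thus reduces to a homological computation about the combinatorics of the inclusion: one must show that $k[\mathrm{Hom}_{\mathbf{D}}(d,i(-))]$ has vanishing higher colimits over $\mathbf{C}$ for every object $d$. Identifying these higher colimits with the homology of the comma category $(d\downarrow i)$, this is exactly the assertion that each $(d\downarrow i)$ is $k$-acyclic, ideally contractible, for instance by exhibiting an initial or terminal object arising from the order structure on the preimages of the maps in $\fas$ (respectively $\Gamma(as)$). I expect this fibre computation to be the main obstacle, since it is the only place where the specific geometry of $\Delta C^{op}$ sitting inside $\fas$ (respectively $\Delta^{op}$ inside $\Gamma(as)$) enters, and it is precisely the acyclicity hypothesis abstracted in the S\l{}omi\'{n}ska--Zimmermann framework recalled later. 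The two stated isomorphisms then follow by one and the same argument, the only difference being the basepoint condition $f(0)=0$ that cuts $\Gamma(as)$ out of $\fas$ and correspondingly $\Delta^{op}$ out of $\Delta C^{op}$; once the relevant comma categories are shown to be acyclic, both cases are established simultaneously.
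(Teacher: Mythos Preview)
The paper does not give its own proof of this statement: it is recalled verbatim from \cite[1.3]{PR02}, with the remark that Pirashvili and Richter proved it via the axiomatic characterisation of $\mathrm{Tor}$ together with a detailed analysis of cyclic sets, and that S\l{}omi\'{n}ska and Zimmermann later gave an alternative categorical proof based on the unique factorisation $\fas = \Delta C \circ \mathbf{\Sigma}^{+}$ (and $\Gamma(as)=\Delta^{op}\circ\mathbf{\Sigma}^{+}$). There is thus no in-paper argument to match; what is relevant is how your outline relates to those two routes.

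Your proposal is a reasonable third phrasing, closest in spirit to the S\l{}omi\'{n}ska--Zimmermann approach but expressed via Kan extensions and acyclicity of comma categories rather than via unique factorisation. Two points are worth flagging. First, your identification $b=(i_1)_{!}k^{\ast}$ is not automatic: Pirashvili and Richter define $b$ concretely, and the clean way to see that it coincides with the Kan extension is precisely to invoke the unique factorisation $\fas=\Delta C\circ\mathbf{\Sigma}^{+}$, which collapses the coend formula for $(i_1)_{!}k^{\ast}$ to $k[\mathbf{\Sigma}^{+}(-,-)]$. Second, the ``crux'' you isolate, contractibility of the comma categories $(d\downarrow i)$, is again most cleanly dispatched by the same unique factorisation: for each morphism $d\to i(c)$ in $\fas$ there is a unique $\mathbf{\Sigma}^{+}$-part and $\Delta C$-part, and the $\Delta C$-part furnishes an initial object of the comma category. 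So both of the places you mark as requiring work are resolved simultaneously by the factorisation property, which is exactly the hypothesis S\l{}omi\'{n}ska and Zimmermann abstract. Your outline is correct, but once one observes the factorisation it is shorter to quote \cite[1.1]{Slom} or \cite[2.7]{Zimm} directly, as the present paper does for its own Theorems~\ref{dihedral-thm} and~\ref{reflexive-thm}, rather than to re-derive the base-change machinery by hand.
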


Pirashvili and Richter proved their theorem by using the axiomatic characterization of $\mathrm{Tor}$ functors, which required detailed analysis of certain cyclic sets. However, it was shown by S\l{}omi\'{n}ska and Zimmermann that there theorems can be proved using category-theoretic methods.  

\section{The framework of S\l{}omi\'{n}ska and Zimmermann}
\label{Slom-Zimm-sec}

S\l{}omi\'{n}ska (\cite{Slom}) and Zimmermann (\cite{Zimm}) independently developed a categorical framework that generalizes the theorems of Pirashvili and Richter (\cite{PR02}).

Let $\mathbf{C}$ be a small category and let $\mathbf{A}$ and $\mathbf{B}$ be subcategories of $\mathbf{C}$, such that all three categories have the same set of objects. Following S\l{}omi\'{n}ska, we will write $\mathbf{C}=\mathbf{A}\circ \mathbf{B}$ if every morphism $f$ in $\mathbf{C}$ can be uniquely written as a composite, $f_{\mathbf{A}} \circ f_{\mathbf{B}}$, of a morphism in $\mathbf{B}$ followed by a morphism in $\mathbf{A}$. We note that we would write $\mathbf{C}=\mathbf{A} \Join \mathbf{B}$ in the notation of Zimmermann.

\begin{eg}
The following important examples follow from the very definition of a crossed simplicial group. Recall the crossed simplicial group categories from Example \ref{CSG-egs} and the groupoids of Definition \ref{groupoids-defn}. We have decompositions $\Delta R = \Delta \circ \mathbf{R}$; $\Delta C = \Delta \circ \mathbf{C}$; $\Delta D=\Delta \circ \mathbf{D}$; $\Delta S= \Delta \circ \mathbf{S}$; and $\Delta H = \Delta \circ \mathbf{H}$.
\end{eg}

\section{Decomposing the category of involutive non-commutative sets}
\label{decomp-sec1}

In this section we prove the technical results required to prove Theorem \ref{dihedral-thm}. This involves decomposing the category $\mathcal{IF}(as)$ into the dihedral category $\Delta D^{op}$ and the groupoid $\mathbf{H}^{+}$. 

\begin{lem}
\label{ifas-decomp-lem}
There is a decomposition $\ifas = \Delta \circ \mathbf{H}$.
\end{lem}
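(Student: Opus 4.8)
The plan is to show that every morphism $f \in \mathrm{Hom}_{\ifas}([n],[m])$ factors uniquely as $f = \phi \circ g$, where $g \in H_{n+1} = \mathrm{Aut}_{\mathbf{H}}([n])$ and $\phi \in \mathrm{Hom}_{\Delta}([n],[m])$ is an order-preserving map. The key observation is that a morphism in $\ifas$ carries exactly two pieces of data beyond its underlying set map: the total ordering on each fibre $f^{-1}(i)$, and the $C_2$-label attached to each element of the domain. The groupoid part $\mathbf{H}$ should absorb precisely this extra structure — the symmetric-group component of $H_{n+1}$ records the reordering, and the $C_2^{n+1}$ component records the involution labels — while the $\Delta$-part retains only the order-preserving underlying map.

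First I would make the factorization explicit. Given $f\colon [n]\to[m]$ in $\ifas$, each fibre $f^{-1}(i)$ is a totally ordered $C_2$-set; concatenating these fibres in the order $i=0,1,\dots,m$ produces a total reordering of $[n]$, which defines a permutation $\sigma \in \Sigma_{n+1}$, together with the collection of $C_2$-labels $(z_0,\dots,z_n)$ read off from the elements. This data assembles into an element $g = (z_0,\dots,z_n;\sigma) \in H_{n+1}$. I would then define $\phi\colon [n]\to[m]$ to be the underlying set map of $f$ composed with $\sigma^{-1}$ so as to become \emph{order-preserving} (sending the reordered fibres to their targets monotonically), giving $\phi \in \mathrm{Hom}_\Delta([n],[m])$. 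The content is that $f = \phi \circ g$ as morphisms of $\ifas$: one must verify that the composite, computed via the ordered-disjoint-union rule with the $C_2$-action of Definition \ref{action-defn}, recovers the original fibre orderings and labels. This is a direct unwinding of the composition formula, checking that the labels multiply correctly under $t\ast(-)$.

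Next I would establish \textbf{uniqueness}. Suppose $\phi \circ g = \phi' \circ g'$ with $\phi,\phi' \in \Delta$ and $g,g' \in H_{n+1}$. Since the automorphisms $g,g'$ induce identity on underlying sets only up to relabelling, comparing underlying set maps forces $\phi$ and $\phi'$ to agree wherever the permutations coincide; then comparing the induced orderings on each fibre pins down $\sigma = \sigma'$, and comparing the $C_2$-labels element by element forces $(z_j) = (z_j')$. The cleanest route is to invoke the known isomorphism $\Delta H \cong \ifas$ recalled in the remark after the definition of $\ifas$, under which the defining property of a crossed simplicial group (every morphism $[n]\to[m]$ in $\Delta H$ is uniquely a pair $(\phi,g)$ with $g \in H_{n+1}$, $\phi \in \Delta$) transports directly to the desired unique factorization $\ifas = \Delta \circ \mathbf{H}$.

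\textbf{The main obstacle} I anticipate is bookkeeping the interaction between the fibre orderings and the $C_2$-labels under composition: the action $t\ast(-)$ both reverses order and twists labels, so one must be careful that the permutation $\sigma$ extracted from $f$ is compatible with the sign data in exactly the way the semidirect product $C_2^{n+1}\rtimes\Sigma_{n+1}$ prescribes. If I verify the factorization at the level of a single morphism and then cite the crossed-simplicial-group structure of $\Delta H$ (equivalently $\ifas$) to obtain uniqueness for free, this obstacle is largely sidestepped, and the statement $\ifas = \Delta \circ \mathbf{H}$ follows immediately from the definition of $\mathbf{C} = \mathbf{A}\circ\mathbf{B}$ in Section \ref{Slom-Zimm-sec}.
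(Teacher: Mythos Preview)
Your proposal is correct and ultimately rests on exactly the same fact the paper uses: the isomorphism $\ifas \cong \Delta H$, after which the decomposition $\Delta \circ \mathbf{H}$ is immediate from the defining property of a crossed simplicial group. The paper's proof is precisely this one-line citation; your explicit construction of the factorization (extracting $\sigma$ from the concatenated fibre orderings and the $C_2$-labels from the superscripts) is additional unpacking of that isomorphism rather than a genuinely different route, and you yourself note that invoking the isomorphism is the cleanest way to get uniqueness.
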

\begin{proof}
As shown in \cite[1.7]{DMG-HO} (see also \cite[3.11]{DMG-IFAS}), there is an isomorphism of categories $\ifas \cong \Delta H$ from which the result follows.
\end{proof}

\begin{lem}
\label{hyp-groupoid-decomp}
There is a decomposition $\mathbf{H}=\mathbf{D} \circ \mathbf{H}^{+}$.
\end{lem}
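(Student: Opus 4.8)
The plan is to reduce the claimed categorical decomposition to a purely group-theoretic factorisation and then to verify that factorisation by comparing orders after establishing a trivial intersection. Since $\mathbf{H}$, $\mathbf{D}$ and $\mathbf{H}^{+}$ are groupoids on the common objects $[n]$, $n\geqslant 0$, whose only morphisms are automorphisms, and since $\mathbf{D}$ and $\mathbf{H}^{+}$ are the sub-groupoids of $\mathbf{H}$ with the same objects and automorphism groups $D_{n+1}$ and $H_{n+1}^{+}$ respectively, the statement $\mathbf{H}=\mathbf{D}\circ\mathbf{H}^{+}$ says exactly that, for every $n\geqslant 0$, each $g\in H_{n+1}$ can be written uniquely as a product $g=d\cdot h$ with $d\in D_{n+1}$ and $h\in H_{n+1}^{+}$. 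There are no cross-object morphisms, so the decomposition is checked one degree at a time.

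First I would show that the multiplication map $D_{n+1}\times H_{n+1}^{+}\rightarrow H_{n+1}$, $(d,h)\mapsto d\cdot h$, is a bijection. Injectivity is equivalent to $D_{n+1}\cap H_{n+1}^{+}$ being trivial: if $d_1 h_1=d_2 h_2$ then $d_2^{-1}d_1=h_2 h_1^{-1}$ lies in both subgroups, so both sides are the identity. To compute this intersection I would invoke Lemma \ref{dihedral-subgroup-lem}, which realises $D_{n+1}$ inside $H_{n+1}$ as the subgroup generated by $R=\left(t_2,\dotsc,t_2;r_{n+1}\right)$ and $T=\left(1,\dotsc,1;t_{n+1}\right)$; its elements are the rotations $T^a$ and the reflections $RT^a$ for $0\leqslant a\leqslant n$. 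Each rotation $T^a=\left(1,\dotsc,1;t_{n+1}^a\right)$ has trivial $C_2$-labels, so it lies in $H_{n+1}^{+}$ only when $t_{n+1}^a$ fixes $0$, i.e. only when $a=0$. Each reflection $RT^a$ has, by the semidirect-product multiplication, all its $C_2$-labels equal to $t_2$; in particular its $0$-th label is $t_2\neq 1$, so it can never satisfy the defining condition $z_0=1$ of $H_{n+1}^{+}$. Hence $D_{n+1}\cap H_{n+1}^{+}=\llb 1\rrb$.

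To finish I would compare orders. Here $\llv D_{n+1}\rrv = 2(n+1)$, while $\llv H_{n+1}^{+}\rrv = 2^{n}\,n!$, since the labels $z_1,\dotsc,z_n$ are free in $C_2$ and $\sigma$ ranges over the $n!$ elements of $\Sigma_{n+1}^{+}$. Their product is therefore $2^{n+1}(n+1)! = \llv H_{n+1}\rrv$. Combined with the trivial intersection, this forces the multiplication map to be a bijection, which yields the required unique factorisation and hence $\mathbf{H}=\mathbf{D}\circ\mathbf{H}^{+}$. The one genuinely computational point, and the step most likely to conceal a convention-dependent subtlety, is the claim that every reflection $RT^a$ has non-trivial $0$-th $C_2$-label; this is precisely where the explicit embedding of $D_{n+1}$ from Lemma \ref{dihedral-subgroup-lem} and the multiplication rule of the semidirect product $H_{n+1}=C_2^{n+1}\rtimes\Sigma_{n+1}$ must be applied with care.
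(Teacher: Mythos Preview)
Your proof is correct but takes a genuinely different route from the paper's. The paper argues constructively: given $(z_0,\dotsc,z_n;\sigma)\in H_{n+1}$ with $\sigma(0)=k$, it splits into the two cases $z_k=1$ and $z_k=t_2$ and in each case writes down the dihedral factor ($T^k$ or $RT^{n-k}$) and the $H_{n+1}^{+}$-factor explicitly. Uniqueness is not spelled out there; it is implicit in the fact that exactly two elements of the embedded $D_{n+1}$ have underlying permutation sending $0\mapsto k$, namely $T^k$ and $RT^{n-k}$, and the value of $z_k$ forces the choice between them. Your argument replaces this case analysis with the cleaner group-theoretic observation that $D_{n+1}\cap H_{n+1}^{+}=\{1\}$ together with the order count $2(n+1)\cdot 2^n n! = 2^{n+1}(n+1)!$, which handles existence and uniqueness simultaneously. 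The trade-off is that the paper's proof yields an explicit formula for the factorisation (potentially useful if one wanted to compute with it), whereas yours is shorter and makes the uniqueness transparent. Your caveat about the $0$-th $C_2$-label of $RT^a$ is well placed but harmless: since $T^a$ has trivial label tuple, $RT^a=(t_2,\dotsc,t_2;r_{n+1}t_{n+1}^a)$ regardless of the permutation-action convention in the semidirect product.
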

\begin{proof}
Let $\left(z_0,\dotsc , z_n;\sigma\right)$ be an element of $H_{n+1}$. For ease of notation, we will write $\tau$ for $t_{n+1}$; $t$ for $t_2$; and $r$ for $r_{n+1}$. Suppose $\sigma(0)=k$. We can therefore write $\sigma=\tau^k\tau^{-k}\sigma$, where $\tau^{-k}\sigma$ is a permutation in $\Sigma_{n+1}^{+}$. We now have two cases. On the one hand, suppose that $z_k=1\in C_2$. In this case, we can write
\[ \left(z_0,\dotsc , z_n;\sigma\right) = \left(1,\dotsc , 1;\tau^k\right) \circ \left(1,z_{k+1},\dotsc , z_{k-1}; \tau^{-k}\sigma\right)\]
and we see that $\left(1,\dotsc , 1;\tau^k\right)\in D_{n+1}$ and $\left(1,z_{k+1},\dotsc , z_{k-1}; \tau^{-k}\sigma\right)\in H_{n+1}^{+}$ as required.

On the other hand, suppose $z_k=t_2=t \in C_2$. In this case we can write
\[\left(z_0,\dotsc , z_n;\sigma\right) = \left(t,\dotsc , t;r\right)\circ \left(t z_n,\dotsc, t z_0;r\sigma\right).\]
Now, $t z_k=1$ and this occurs in position $n-k$, so we can apply a similar argument to the previous case to write
\begin{align*}
\left(z_0,\dotsc , z_n;\sigma\right) &= \left(t,\dotsc , t;r\right)\circ \left(t z_n,\dotsc, t z_0;r\sigma\right)\\
&= \left(t,\dotsc , t;r\right)\circ \left(1,\dotsc , 1;\tau^{n-k}\right) \circ \left(1, t z_{k-1},\dotsc , t z_{k+1} ;\tau^{-(n-k)}r\sigma\right).
\end{align*}
We observe that 
\[\tau^{-(n-k)}r\sigma(0)=\tau^{-(n-k)}r(k)=\tau^{-(n-k)}(n-k)=0\]
so $\left(1, t z_{k-1},\dotsc , t z_{k+1} ;\tau^{-(n-k)}r\sigma\right)\in H_{n+1}^{+}$ and $\left(t,\dotsc , t;r\right)\circ \left(1,\dotsc , 1;\tau^{n-k}\right)\in D_{n+1}$ as required.
\end{proof}

\section{Decomposing the category of based, involutive non-commutative sets}
\label{decomp-sec2}
In this section we prove the technical results required to prove Theorem \ref{reflexive-thm}. This involves decomposing the category $\mathcal{I}\Gamma(as)$ into the reflexive category $\Delta R^{op}$ and the groupoid $\mathbf{H}^{+}$. 

\begin{lem}
\label{Delta-R-subcat-lem}
The category $\Delta R^{op}$ is a subcategory of $\mathcal{I}\Gamma(as)$.
\end{lem}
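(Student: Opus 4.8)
The plan is to exhibit $\Delta R^{op}$ as a subcategory by constructing an explicit faithful functor $\iota\colon \Delta R^{op}\to \mathcal{I}\Gamma(as)$ which is the identity on objects and which extends Pirashvili and Richter's inclusion $i_2\colon \Delta^{op}\to \Gamma(as)$. One might hope to argue purely structurally from $\Delta R=\Delta\circ \mathbf{R}$ together with the inclusion of groupoids $\mathbf{R}\subset \mathbf{H}$ and Lemma \ref{ifas-decomp-lem}, but this produces $\Delta R$ inside $\ifas$, whereas we need the \emph{opposite} category inside the \emph{based} subcategory, so an explicit construction is required. First I would record that $\Gamma(as)$ is a subcategory of $\mathcal{I}\Gamma(as)$: a based non-commutative set map becomes a based involutive one once every element of every fibre is decorated with the trivial label $1\in C_2$, and with all labels trivial the composition rule of $\ifas$ reduces to that of $\fas$ by Definition \ref{action-defn}. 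Composing with $i_2$ gives $\Delta^{op}\hookrightarrow \mathcal{I}\Gamma(as)$, so only the reflexive automorphisms remain to be placed.

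Since $\Delta R=\Delta\circ \mathbf{R}$ and $\mathbf{R}$ is generated by the involutions $r_{n+1}$, the category $\Delta R^{op}$ is generated by the image of $\Delta^{op}$ together with the automorphisms $r_{n+1}$ (note that $r_{n+1}^{op}=r_{n+1}^{-1}=r_{n+1}$). I would define $\iota(r_{n+1})$ to be the based involutive automorphism $\rho_n$ of $[n]$ whose underlying bijection fixes $0$ and sends $j\mapsto n+1-j$ for $1\leqslant j\leqslant n$, with every element carrying the label $t_2$. This is exactly the morphism of $\mathcal{I}\Gamma(as)$ that realizes the reflexive involution $m\otimes a_1\otimes\cdots\otimes a_n\mapsto \ol{m}\otimes\ol{a_n}\otimes\cdots\otimes\ol{a_1}$ of Definition \ref{loday-defn} on the functor $\mathsf{R}(A,M)$. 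It is worth emphasising that $\rho_n$ fixes the basepoint, so it genuinely lies in $\mathcal{I}\Gamma(as)$, and that it is \emph{not} the element $R$ of Lemma \ref{dihedral-subgroup-lem}, which reverses the whole of $[n]$ and does not fix $0$.

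The substance of the proof is checking that $\iota$ is well defined, that is, that it respects the defining relations of $\Delta R^{op}$. The relation $r_{n+1}^2=1$ is immediate: reversing $\llb 1,\dotsc ,n\rrb$ twice returns the identity bijection, and the labels multiply as $t_2\cdot t_2=1$ under Definition \ref{action-defn}. The real work, and the step I expect to be the main obstacle, is verifying the crossed relations governing how $r_{n+1}$ interacts with the faces and degeneracies; for each such relation one must compute both sides as composites in $\ifas$, carefully tracking the order-reversal and the involution labels generated by the ordered disjoint union of Definition \ref{action-defn}. Once functoriality is in hand, faithfulness follows formally: by Lemma \ref{ifas-decomp-lem} every morphism of $\ifas$, hence of $\mathcal{I}\Gamma(as)$, has a unique $\Delta\circ \mathbf{H}$ factorisation, while every morphism of $\Delta R^{op}$ has the unique normal form $r_{n+1}^{\varepsilon}\circ \phi^{op}$ with $\varepsilon\in\llb 0,1\rrb$ and $\phi^{op}\in\Delta^{op}$; since $i_2$ is injective and each $\rho_n$ is an invertible automorphism, one recovers the pair $(\varepsilon,\phi)$ from the image, so $\iota$ is injective on morphisms. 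This identifies $\Delta R^{op}$ with a subcategory of $\mathcal{I}\Gamma(as)$.
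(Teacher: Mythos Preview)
Your proposal is correct and follows essentially the same approach as the paper: embed $\Delta^{op}$ via Pirashvili--Richter's $i_2$ with trivial $C_2$-labels, then send $r_{n+1}$ to the based bijection fixing $0$, mapping $j\mapsto n+1-j$ for $j\geqslant 1$, with every fibre labelled by $t_2$; this is exactly the morphism the paper writes down in preimage notation. The paper is terser, dismissing the compatibility of $\rho_n$ with faces and degeneracies as ``a straightforward check'' and not addressing faithfulness explicitly, whereas you correctly identify the crossed relations as the substantive verification and supply a clean argument for injectivity.
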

\begin{proof}
We begin by noting that $\Delta^{op}$ is a subcategory of $\mathcal{I}\Gamma(as)$. In \cite[1.4]{PR02}, it is shown that $\Delta^{op}$ is a subcategory of $\Gamma(as)$ by explaining how to include face and degeneracy maps. We can include the category $\Delta^{op}$ into $\mathcal{I}\Gamma(as)$ as a subcategory by adding identity labels to all preimages of singletons. 

We include the reflections $r_{n+1}$ for $n\geqslant 0$ from $\Delta R^{op}$ into $\mathcal{I}\Gamma(as)$ as follows. For the reflection
\[r_{n+1}\colon [n]\rightarrow [n]\]
we write
\[r_{n+1}^{-1}(i)=\begin{cases}
\llb 0^t\rrb & i=0\\
\llb (n-i+1)^t\rrb & i>0
\end{cases}
\]
in $\mathcal{I}\Gamma(as)$. A straightforward check shows that the generators of $\Delta^{op}$ and the maps $r_{n+1}$ generate the category $\Delta R^{op}$ as a subcategory of $\mathcal{I}\Gamma(as)$.
\end{proof}

\begin{lem}
\label{I-gamma-as-decomp-lem}
There is a decomposition $\mathcal{I}\Gamma(as) = \Delta R^{op} \circ \mathbf{H}^{+}$.
\end{lem}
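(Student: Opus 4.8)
The plan is to obtain the decomposition by restricting the non-based decomposition $\ifas = \Delta D^{op}\circ \mathbf{H}^{+}$ underlying Theorem \ref{dihedral-thm} (obtained in Section \ref{decomp-sec1} by combining Lemmas \ref{ifas-decomp-lem} and \ref{hyp-groupoid-decomp}) to the based subcategory $\mathcal{I}\Gamma(as)$. First I would record the two required inclusions of subcategories. That $\Delta R^{op}\subseteq \mathcal{I}\Gamma(as)$ is exactly Lemma \ref{Delta-R-subcat-lem}. For $\mathbf{H}^{+}\subseteq \mathcal{I}\Gamma(as)$, an element $(z_0,\dotsc,z_n;\sigma)\in H_{n+1}^{+}$ has $\sigma\in\Sigma_{n+1}^{+}$ and $z_0=1$, so its underlying map of based sets fixes $0$ with trivial label and hence is a morphism of $\mathcal{I}\Gamma(as)$. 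I would also note that $\Delta R^{op}\subseteq \Delta D^{op}$: the reflection of Lemma \ref{Delta-R-subcat-lem} is, by a direct computation with the conventions of Definition \ref{hyperoctahedral-bar-defn}, the element $RT^{n}=(t_2,\dotsc,t_2;r_{n+1}t_{n+1}^{n})$ of $\mathbf{D}$.

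With these in hand, existence of the factorization is immediate. Given $f\in\mathcal{I}\Gamma(as)$, view it in $\ifas$ and write $f=g\circ h$ with $g\in\Delta D^{op}$ and $h\in\mathbf{H}^{+}$, as permitted by the non-based decomposition. Since $h$ and hence $h^{-1}$ lie in $\mathbf{H}^{+}\subseteq\mathcal{I}\Gamma(as)$, and $f\in\mathcal{I}\Gamma(as)$, the composite $g=f\circ h^{-1}$ fixes the basepoint. The whole statement therefore reduces to the identity $\Delta D^{op}\cap \mathcal{I}\Gamma(as)=\Delta R^{op}$ inside $\ifas$; the inclusion $\supseteq$ is the previous paragraph, and uniqueness of the factorization in $\Delta R^{op}\circ\mathbf{H}^{+}$ is inherited from the uniqueness in $\ifas=\Delta D^{op}\circ\mathbf{H}^{+}$ because $\Delta R^{op}\subseteq\Delta D^{op}$.

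For the remaining inclusion $\subseteq$, I would use the crossed simplicial decomposition $\Delta D=\Delta\circ\mathbf{D}$, which dualizes to $\Delta D^{op}=\mathbf{D}\circ\Delta^{op}$. Thus a basepoint-fixing $g\in\Delta D^{op}$ can be written $g=d\circ\psi$ with $\psi\in\Delta^{op}$ and $d\in\mathbf{D}$. The face and degeneracy morphisms fix the basepoint in the embedding of Lemma \ref{Delta-R-subcat-lem}, so $\psi(0)=0$ and hence $g(0)=d(0)$; the hypothesis $g(0)=0$ forces the dihedral automorphism $d$ to fix the basepoint. A short computation in the spirit of Lemma \ref{dihedral-subgroup-lem} then shows that among the $2(n+1)$ elements $\{T^{k},\,RT^{k}\}$ of $\mathbf{D}$ the only ones whose underlying permutation fixes $0$ are the identity $T^{0}$ and the reflection $RT^{n}$; these are precisely the generators of the reflexive groupoid realized in Lemma \ref{Delta-R-subcat-lem}. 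Therefore $d$ lies in that groupoid and $g=d\circ\psi\in \Delta R^{op}$, completing the crux and hence the proof.

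The main obstacle is this crux, and specifically the bookkeeping of the $C_2$-labels under the opposite category: one must verify that the basepoint-fixing elements of the embedded copy of $\mathbf{D}$ are exactly the identity and the reflection $RT^{n}$ (with all labels equal to $t_2$), matching the map written down in Lemma \ref{Delta-R-subcat-lem}, rather than the reflection $R$ itself, which moves the basepoint. As a sanity check I would confirm the count at the level of automorphisms: the basepoint-fixing automorphisms of $[n]$ in $\mathcal{I}\Gamma(as)$ number $2^{n+1}n!$, matching $\lvert R_{n+1}\rvert\cdot\lvert H_{n+1}^{+}\rvert = 2\cdot 2^{n}n!$.
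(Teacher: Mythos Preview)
Your proposal is correct and follows essentially the same route as the paper: both restrict the non-based decomposition $\ifas=\Delta D\circ\mathbf{H}^{+}$ to the based subcategory, use that $\mathbf{H}^{+}\subseteq\mathcal{I}\Gamma(as)$, and reduce everything to the identity $\Delta D^{op}\cap\mathcal{I}\Gamma(as)=\Delta R^{op}$. The only difference is at this crux, where the paper argues in one line that any morphism of $\Delta D^{op}$ outside $\Delta R^{op}$ must involve a non-identity cycle and hence move the basepoint, while you make the same point more explicitly via the factorization $\Delta D^{op}=\mathbf{D}\circ\Delta^{op}$ and the identification of the two basepoint-fixing elements $T^{0}$ and $RT^{n}$ of $\mathbf{D}$.
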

\begin{proof}
By Lemmata \ref{ifas-decomp-lem} and \ref{hyp-groupoid-decomp} we have a decomposition $\ifas = \left(\Delta D\right) \circ \mathbf{H}^{+}$.
The groupoid $\mathbf{H}^{+}$ is a subcategory of $\mathcal{I}\Gamma(as)$, since the elements of this groupoid preserve $0$. We also know that $\mathcal{I}\Gamma(as)$ is a subcategory of $\ifas$. Combining these facts, together with the duality isomorphism $\Delta D \cong \Delta D^{op}$ of \cite[1.4]{Dunn}, we obtain
\begin{align*}
\mathcal{I}\Gamma(as) &= \ifas \cap \mathcal{I}\Gamma(as)\\
&=\left(\Delta D \circ \mathbf{H}^{+}\right) \cap \mathcal{I}\Gamma(as)\\
&= \left(\Delta D \cap \mathcal{I}\Gamma(as)\right)\circ \mathbf{H}^{+}\\
&\cong \left(\Delta D^{op} \cap \mathcal{I}\Gamma(as)\right)\circ \mathbf{H}^{+}.
\end{align*}
By Lemma \ref{Delta-R-subcat-lem}, $\Delta R^{op}$ is a subcategory of $\mathcal{I}\Gamma(as)$ and so $\Delta R^{op}$ is a subcategory of the intersection $\Delta D^{op} \cap \mathcal{I}\Gamma(as)$. Now, consider a morphism in $\Delta D^{op}$ which is not in $\Delta R^{op}$. Since $\Delta R^{op}$ contains all the order-preserving maps and all the reflections, such a morphism must contain a non-identity cycle. However, such a map does not preserve $0$ and so cannot be an morphism in $\mathcal{I}\Gamma(as)$. Therefore, we have $\Delta R^{op}=\left(\Delta D^{op} \cap \mathcal{I}\Gamma(as)\right)$ and so $\mathcal{I}\Gamma(as) = \Delta R^{op} \circ \mathbf{H}^{+}$ as required.
\end{proof}

\section{Dihedral homology}
\label{dihedral-sec}
We use the decompositions of Section \ref{decomp-sec1} to prove our first main theorem. 

\begin{defn}
\label{functor-B-defn}
Let $B\colon \ifas^{op}\rightarrow \mathbf{Mod}_k$ be the functor defined on objects by
\[B([n])=k\left[\coprod_{m\geqslant 0} \mathrm{Hom}_{\mathbf{H}^{+}}\left([m],[n]\right)\right]\]
and determined on morphisms by pre-composition. By abuse of notation we will also let $B$ denote the functor $\Delta H^{op}\rightarrow \mathbf{Mod}_k$ obtained by pre-composing with the opposite of the isomorphism of \cite[1.7]{DMG-HO}.
\end{defn}

\begin{thm}
\label{dihedral-thm}
Let $F\colon \ifas \rightarrow \mathbf{Mod}_k$. There exist isomorphisms of graded $k$-modules
\[HD_{\star}(F\circ i\circ D)=\mathrm{Tor}_{\star}^{\Delta D^{op}}\left(k^{\ast}, F\circ i\circ D\right) \cong\mathrm{Tor}_{\star}^{\ifas}\left(B,F\right)\cong \mathrm{Tor}_{\star}^{\Delta H}\left(B,F\right),\]
where $D\colon\Delta D^{op}\rightarrow \Delta D$ is the dihedral duality isomorphism of \cite[1.4]{Dunn} and $i\colon \Delta D \rightarrow \ifas$ is the inclusion of the subcategory.
\end{thm}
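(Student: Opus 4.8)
The plan is to route all three $\mathrm{Tor}$-groups through the single decomposition $\ifas = \Delta D\circ\mathbf{H}^{+}$ and then apply the S\l{}omi\'{n}ska--Zimmermann framework. As already observed in the proof of Lemma \ref{I-gamma-as-decomp-lem}, Lemmas \ref{ifas-decomp-lem} and \ref{hyp-groupoid-decomp} combine (using associativity of the $\circ$-decomposition together with the standard crossed simplicial group decomposition $\Delta D=\Delta\circ\mathbf{D}$) to give $\ifas = \left(\Delta\circ\mathbf{D}\right)\circ\mathbf{H}^{+}=\Delta D\circ\mathbf{H}^{+}$. The essential structural feature is that the complementary factor $\mathbf{H}^{+}$ is a \emph{groupoid}, so it is precisely the piece the framework is designed to integrate away, leaving $\mathrm{Tor}$ over the simplicial factor $\Delta D$.

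Next I would feed the decomposition $\mathbf{C}=\mathbf{A}\circ\mathbf{B}$ with $\mathbf{C}=\ifas$, $\mathbf{A}=\Delta D$ and $\mathbf{B}=\mathbf{H}^{+}$ into the framework. The functor $B$ of Definition \ref{functor-B-defn}, whose value $k\left[\coprod_{m}\mathrm{Hom}_{\mathbf{H}^{+}}([m],[n])\right]$ records exactly the groupoid $\mathbf{H}^{+}$, is the functor that the framework attaches to the factor $\mathbf{B}$ --- the direct analogue of Pirashvili and Richter's functor $b$. The framework then delivers the central isomorphism
\[\mathrm{Tor}_{\star}^{\ifas}(B, F)\cong\mathrm{Tor}_{\star}^{\Delta D}\left(k^{\ast}, F\circ i\right),\]
where $i\colon\Delta D\to\ifas$ is the inclusion, so that $F\circ i$ is the restriction of $F$ to $\Delta D$. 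Since dihedral homology is defined over $\Delta D^{op}$, I would then transport along the dihedral duality isomorphism $D\colon\Delta D^{op}\to\Delta D$: an isomorphism of categories carries the coend defining $\otimes_{\mathbf{C}}$ isomorphically and fixes the constant functor $k^{\ast}$, so
\[\mathrm{Tor}_{\star}^{\Delta D}\left(k^{\ast}, F\circ i\right)\cong\mathrm{Tor}_{\star}^{\Delta D^{op}}\left(k^{\ast}, F\circ i\circ D\right)=HD_{\star}(F\circ i\circ D),\]
the final equality being the definition of dihedral homology.

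The remaining isomorphism $\mathrm{Tor}_{\star}^{\ifas}(B, F)\cong\mathrm{Tor}_{\star}^{\Delta H}(B, F)$ is formal: it is transport of $\mathrm{Tor}$ along the isomorphism of categories $\ifas\cong\Delta H$ of \cite[1.7]{DMG-HO} --- indeed this is precisely how $B$ and $F$ are regarded as functors on $\Delta H$ in the first place. Concatenating the three displays proves the theorem. I expect the genuine obstacle to sit entirely in the middle step: verifying that the functor $B$ of Definition \ref{functor-B-defn} really is the functor the S\l{}omi\'{n}ska--Zimmermann theorem requires for the factor $\mathbf{H}^{+}$ --- that its variance, its precomposition action, and the groupoid hypothesis on $\mathbf{H}^{+}$ all match the framework's input --- so that the conclusion genuinely produces the constant coefficient $k^{\ast}$ over $\Delta D$. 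Once that identification is secured, the duality bookkeeping and the category isomorphism are routine transport of structure.
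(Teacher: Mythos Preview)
Your proposal is correct and follows essentially the same route as the paper: you obtain the decomposition $\ifas=\Delta D\circ\mathbf{H}^{+}$ from Lemmas \ref{ifas-decomp-lem} and \ref{hyp-groupoid-decomp}, apply the S\l{}omi\'{n}ska--Zimmermann framework with $B$ as the functor attached to $\mathbf{H}^{+}$ to reach $\mathrm{Tor}_{\star}^{\Delta D}(k^{\ast},F\circ i)$, and then transport along the dihedral duality $D$ and the isomorphism $\ifas\cong\Delta H$. The paper's proof is exactly this, stated more tersely; your remark that the only genuine check is matching $B$ to the framework's input is accurate, and the paper simply asserts this by citing \cite[1.1]{Slom} or \cite[2.7]{Zimm}.
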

\begin{proof}
By Lemmata \ref{ifas-decomp-lem} and \ref{hyp-groupoid-decomp}, we have decompositions
\[\ifas=\Delta \circ \mathbf{H}= \Delta \circ \mathbf{D} \circ \mathbf{H}^{+}=\left(\Delta \circ \mathbf{D}\right)\circ \mathbf{H}^{+},\]
where $\Delta \circ \mathbf{D}$ is precisely the dihedral category $\Delta D$.

By \cite[1.1]{Slom} or \cite[2.7]{Zimm}, we have an isomorphism of graded $k$-modules 
\[\mathrm{Tor}_{\star}^{\ifas}\left(B,F\right) \cong \mathrm{Tor}_{\star}^{\Delta D}\left(k^{\ast}, F\circ i\right),\]
where $k^{\ast}$ is the $k$-constant right $\Delta D$-module. The first isomorphism follows from the fact that $k^{\ast}\circ D$ is the $k$-constant right $\Delta D^{op}$-module, which we also call $k^{\ast}$. The second isomorphism follows from the isomorphism of \cite[1.7]{DMG-HO}.
\end{proof}

\begin{cor}
\label{dihedral-cor}
Let $A$ be an involutive $k$-algebra. There exist isomorphisms of graded $k$-modules
\[HD_{\star}(A)=\mathrm{Tor}_{\star}^{\Delta D^{op}}\left(k^{\ast} , \mathcal{L}(A)\right) \cong \mathrm{Tor}_{\star}^{\ifas}\left(B, \mathsf{H}_A\right)\cong \mathrm{Tor}_{\star}^{\Delta H}\left(B, \mathsf{H}_A\right).\]
\end{cor}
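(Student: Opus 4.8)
The corollary is exactly the statement of Theorem \ref{dihedral-thm} with $F = \mathsf{H}_A \colon \ifas \to \mathbf{Mod}_k$, so the only work is to identify the left-hand side $\mathrm{Tor}_{\star}^{\Delta D^{op}}(k^{\ast}, \mathsf{H}_A \circ i \circ D)$ with the dihedral homology $HD_{\star}(A) = \mathrm{Tor}_{\star}^{\Delta D^{op}}(k^{\ast}, \mathcal{L}(A))$. Concretely, I would verify that the composite functor $\mathsf{H}_A \circ i \circ D \colon \Delta D^{op} \to \mathbf{Mod}_k$ agrees with the dihedral Loday functor $\mathcal{L}(A)$ of Definition \ref{dihedral-loday}. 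The middle and right terms then come verbatim from the theorem, since the isomorphism of \cite[1.7]{DMG-HO} carries $\mathsf{H}_A$ on $\ifas$ to the hyperoctahedral bar construction on $\Delta H$.

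First I would set up the identification on objects, which is immediate: both $\mathcal{L}(A)([n])$ and $(\mathsf{H}_A \circ i \circ D)([n])$ equal $A^{\otimes n+1}$ (recall $M = A$ in the dihedral setting). The substance is checking agreement on morphisms. It suffices to check the generators of $\Delta D^{op}$, namely the faces $\partial_i$, the degeneracies $s_j$, the cyclic operator $t_{n+1}$, and the reflection $r_{n+1}$. For each generator I would trace it through the duality isomorphism $D \colon \Delta D^{op} \to \Delta D$ of \cite[1.4]{Dunn}, then through the inclusion $i \colon \Delta D \to \ifas$ (using the embedding $\mathbf{D} \hookrightarrow \mathbf{H}$ recorded in Lemma \ref{dihedral-subgroup-lem}, where $r_{n+1}$ becomes $R = (t_2,\dotsc,t_2;r_{n+1})$ and $t_{n+1}$ becomes $T = (1,\dotsc,1;t_{n+1})$), and finally evaluate $\mathsf{H}_A$ via Definition \ref{hyperoctahedral-bar-defn}. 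The expected payoff is that the $C_2$-labels $t_2$ attached to every slot of $R$ produce precisely the bar-involution $a_j \mapsto \ol{a_j}$ together with the order-reversal, matching the formula $r_{n+1}(a_0 \otimes \cdots \otimes a_n) = \ol{a_0} \otimes \ol{a_n} \otimes \cdots \otimes \ol{a_1}$ that $\mathcal{L}(A)$ inherits from Definitions \ref{loday-defn} and \ref{dihedral-loday}.

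The main obstacle I anticipate is bookkeeping rather than conceptual: the duality isomorphism $D$ reverses the roles of the two factors in the decomposition $\Delta D = \Delta \circ \mathbf{D}$, so a face map in $\Delta D^{op}$ corresponds to a \emph{codegeneracy}-type map in $\Delta D$, and the group element in the $(\phi, g)$ presentation must be transported correctly. I would need to be careful that the ordered-preimage convention in Definition \ref{hyperoctahedral-bar-defn} (product ordered according to $\phi$, empty products giving $1_A$) reproduces the multiplications $ma_1$, $a_i a_{i+1}$, $a_n m$ in the face maps and the unit insertions in the degeneracies. The reflection is the delicate case, since that is where the involution enters; I would verify it by direct computation using the action of Definition \ref{action-defn} on the $C_2$-labelled preimages, confirming both the reversal of tensor order and the application of the bar.

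Once this functor identification is complete, the corollary follows formally: substituting $F = \mathsf{H}_A$ into Theorem \ref{dihedral-thm} gives
\[
\mathrm{Tor}_{\star}^{\Delta D^{op}}\left(k^{\ast}, \mathcal{L}(A)\right) \cong \mathrm{Tor}_{\star}^{\ifas}\left(B, \mathsf{H}_A\right) \cong \mathrm{Tor}_{\star}^{\Delta H}\left(B, \mathsf{H}_A\right),
\]
and the leftmost term is $HD_{\star}(A)$ by Definition \ref{dihedral-loday}. So the proof reduces entirely to the morphism-level check that $\mathsf{H}_A \circ i \circ D = \mathcal{L}(A)$, which I expect to dispatch by listing the generators and verifying each.
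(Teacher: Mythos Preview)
Your approach is exactly the paper's: apply Theorem \ref{dihedral-thm} with $F=\mathsf{H}_A$ and identify $\mathsf{H}_A\circ i\circ D$ with the dihedral Loday functor $\mathcal{L}(A)$. The only difference is that the paper dispatches that identification by citing \cite[Lemma 2.2]{Fie} rather than checking the generators by hand.
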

\begin{proof}
By \cite[Lemma 2.2]{Fie}, the composite $\mathsf{H}_A\circ i \circ D\colon \Delta D^{op}\rightarrow \mathbf{Mod}_k$ is the Loday functor $\mathcal{L}(A)$ of Definition \ref{dihedral-loday} and the result follows from Theorem \ref{dihedral-thm}.
\end{proof}

\begin{rem}
Pirashvili and Richter's result was the first to draw a link between the category of non-commutative sets and cyclic homology. This connection has been shown to go further, notably in papers of Angeltveit (\cite{Angeltveit1}, \cite{Angeltveit2}), where the category of non-commutative sets plays an important role in forming a cyclic bar construction for $A_{\infty}$ $H$-spaces. In future work, the author hopes to investigate further connections between the category of involutive, non-commutative sets and dihedral homology.
\end{rem}

\section{Reflexive homology}
\label{reflexive-sec}
We use the decomposition of Section \ref{decomp-sec2} to prove our second main theorem.

\begin{thm}
\label{reflexive-thm}
Let $F\colon \mathcal{I}\Gamma(as)\rightarrow \mathbf{Mod}_k$. There is an isomorphism of graded $k$-modules
\[HR_{\star}(F\circ i) = \mathrm{Tor}_{\star}^{\Delta R^{op}}\left(k^{\ast}, F\circ i\right) \cong\mathrm{Tor}_{\star}^{\mathcal{I}\Gamma(as)}\left(B^{\prime},F\right)\]
where $B^{\prime}$ is the restriction of the functor $B$ from Definition \ref{functor-B-defn} to the subcategory $\mathcal{I}\Gamma(as)$ and $i\colon \Delta R^{op}\rightarrow \mathcal{I}\Gamma(as)$ is the inclusion of the subcategory.
\end{thm}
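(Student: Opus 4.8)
The plan is to mirror the proof of Theorem \ref{dihedral-thm}, applying the S\l{}omi\'{n}ska--Zimmermann framework to the decomposition of $\mathcal{I}\Gamma(as)$ established in Section \ref{decomp-sec2}. By Lemma \ref{I-gamma-as-decomp-lem} we have a decomposition $\mathcal{I}\Gamma(as) = \Delta R^{op} \circ \mathbf{H}^{+}$ in the sense of S\l{}omi\'{n}ska, so that every morphism of $\mathcal{I}\Gamma(as)$ factors uniquely as a morphism of $\mathbf{H}^{+}$ followed by a morphism of $\Delta R^{op}$. This is exactly the data required to invoke \cite[1.1]{Slom} (equivalently \cite[2.7]{Zimm}) with $\mathbf{C}=\mathcal{I}\Gamma(as)$, $\mathbf{A}=\Delta R^{op}$ and $\mathbf{B}=\mathbf{H}^{+}$, where the inclusion $i\colon \Delta R^{op}\to \mathcal{I}\Gamma(as)$ is the one furnished by Lemma \ref{Delta-R-subcat-lem}.

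Next I would verify that $B'$ is precisely the distinguished module the framework pairs with $F$. The functor $B$ of Definition \ref{functor-B-defn} has $B([n]) = k\left[\coprod_{m\geqslant 0}\mathrm{Hom}_{\mathbf{H}^{+}}([m],[n])\right]$, built entirely from the morphisms of the groupoid factor $\mathbf{H}^{+}$, with action by pre-composition. Since $\mathbf{H}^{+}$ is a subgroupoid of $\mathcal{I}\Gamma(as)$, restricting $B$ to $\mathcal{I}\Gamma(as)^{op}$ leaves its values on objects unchanged, so $B'([n]) = k\left[\coprod_{m\geqslant 0}\mathrm{Hom}_{\mathbf{H}^{+}}([m],[n])\right]$ with the action still given by pre-composition. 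Thus $B'$ is exactly the right $\mathcal{I}\Gamma(as)$-module associated to the groupoid part $\mathbf{H}^{+}$ in the S\l{}omi\'{n}ska--Zimmermann setup, and applying the framework yields an isomorphism of graded $k$-modules
\[\mathrm{Tor}_{\star}^{\mathcal{I}\Gamma(as)}\left(B',F\right) \cong \mathrm{Tor}_{\star}^{\Delta R^{op}}\left(k^{\ast}, F\circ i\right),\]
where $k^{\ast}$ is the $k$-constant right $\Delta R^{op}$-module. The right-hand side is $HR_{\star}(F\circ i)$ by definition of reflexive homology, completing the argument.

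The key simplification over Theorem \ref{dihedral-thm} is that no duality step is needed here. Whereas the dihedral decomposition produced the category $\Delta D$ and required the duality $\Delta D\cong \Delta D^{op}$ of \cite[1.4]{Dunn} to reach $\Delta D^{op}$, Lemma \ref{I-gamma-as-decomp-lem} already places $\Delta R^{op}$ on the nose as the non-groupoid factor, so the constant functor $k^{\ast}$ on $\Delta R^{op}$ appears directly. Consequently the substantive work has been front-loaded into the decomposition Lemma \ref{I-gamma-as-decomp-lem}, and the only genuine point to check in the proof itself is the identification of $B'$ with the correct S\l{}omi\'{n}ska--Zimmermann functor. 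I expect no real obstacle beyond this bookkeeping; the statement should follow in a few lines once the decomposition and the identity of $B'$ are in place.
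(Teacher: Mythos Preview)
Your proposal is correct and follows essentially the same approach as the paper: invoke the decomposition $\mathcal{I}\Gamma(as) = \Delta R^{op}\circ \mathbf{H}^{+}$ from Lemma \ref{I-gamma-as-decomp-lem} and then apply \cite[1.1]{Slom} or \cite[2.7]{Zimm}. The paper's proof is in fact just these two sentences; your additional remarks on identifying $B'$ and on the absence of a duality step are accurate elaborations but not required beyond what the paper records.
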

\begin{proof}
By Lemma \ref{I-gamma-as-decomp-lem}, we have a decomposition $\mathcal{I}\Gamma(as) = \left(\Delta R^{op}\right)\circ \mathbf{H}^{+}$. The theorem now follows from \cite[1.1]{Slom} or \cite[2.7]{Zimm}.
\end{proof}

\begin{cor}
\label{reflexive-cor}
Let $A$ be an involutive $k$-algebra and let $M$ be an involutive $A$-bimodule. There is an isomorphism of graded $k$-modules
\[HR_{\star}\left(A,M\right) =\mathrm{Tor}_{\star}^{\Delta R^{op}}\left(k^{\ast}, \mathcal{L}(A,M)\right) \cong  \mathrm{Tor}_{\star}^{\mathcal{I}\Gamma(as)}\left(B^{\prime} , \mathsf{R}(A,M)\right).\]
\end{cor}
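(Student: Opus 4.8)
The plan is to obtain this corollary as the special case $F=\mathsf{R}(A,M)$ of Theorem \ref{reflexive-thm}. Applying that theorem to this choice of $F$ immediately yields
\[\mathrm{Tor}_{\star}^{\Delta R^{op}}\left(k^{\ast}, \mathsf{R}(A,M)\circ i\right) \cong \mathrm{Tor}_{\star}^{\mathcal{I}\Gamma(as)}\left(B^{\prime}, \mathsf{R}(A,M)\right),\]
so the entire task reduces to identifying the restricted functor $\mathsf{R}(A,M)\circ i\colon \Delta R^{op}\rightarrow \mathbf{Mod}_k$ with the reflexive Loday functor $\mathcal{L}(A,M)$ of Definition \ref{loday-defn}. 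Once this identification is in hand, the left-hand side becomes $\mathrm{Tor}_{\star}^{\Delta R^{op}}(k^{\ast},\mathcal{L}(A,M))=HR_{\star}(A,M)$ by definition, and the corollary follows.

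This identification is the reflexive analogue of the computation invoked from \cite[Lemma 2.2]{Fie} in the proof of Corollary \ref{dihedral-cor}, and I would verify it on generators. On objects both functors send $[n]$ to $M\otimes A^{\otimes n}$, so it suffices to compare them on the morphisms that generate $\Delta R^{op}$ as a subcategory of $\mathcal{I}\Gamma(as)$, namely the faces and degeneracies coming from $\Delta^{op}$ together with the reflections $r_{n+1}$, using the explicit inclusion recorded in Lemma \ref{Delta-R-subcat-lem}.

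For the faces and degeneracies I would read the structure maps directly off the definition of $\mathsf{R}(A,M)$: since a morphism $f$ in $\mathcal{I}\Gamma(as)$ satisfies $f(0)=0$, the factor $a_0$ always remains in the zeroth slot (which carries the $M$-module structure), while the remaining tensor factors lying over each target point are multiplied in $A$ according to their total order. This recovers exactly the inner multiplications $a_ia_{i+1}$, the left action $ma_1$, the right action $a_nm$, and the unit insertions of Definition \ref{Loday-func-Hochschild-defn}; it is the based, involutive refinement of Pirashvili and Richter's identification, and for $M=A$ it is precisely the restriction to $\Delta R^{op}\subset \Delta D^{op}$ of the dihedral identity used in Corollary \ref{dihedral-cor}. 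For the reflections, the inclusion of Lemma \ref{Delta-R-subcat-lem} gives $r_{n+1}^{-1}(0)=\llb 0^t\rrb$ and $r_{n+1}^{-1}(i)=\llb (n-i+1)^t\rrb$ for $i>0$, so each preimage is a singleton labelled by $t$; applying the involution and re-indexing yields
\[\mathsf{R}(A,M)(r_{n+1})(a_0\otimes a_1\otimes \cdots \otimes a_n)=\overline{a_0}\otimes \overline{a_n}\otimes \cdots \otimes \overline{a_1},\]
which is exactly $r_{n+1}(m\otimes a_1\otimes\cdots\otimes a_n)$ of Definition \ref{loday-defn} under the identification $m=a_0$. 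Hence $\mathsf{R}(A,M)\circ i=\mathcal{L}(A,M)$.

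The main obstacle is the bookkeeping in the penultimate step: one must check that the involutive $A$-bimodule structure on $M$ is the one that actually appears, in particular that the involution applied to the zeroth factor is the bimodule involution $m\mapsto \overline{m}$ and that the left and right actions land on the correct sides under $\partial_0$ and $\partial_n$. This is where the hypothesis that $M$ is an \emph{involutive} bimodule (not merely a bimodule) is used, and where restricting to the based category $\mathcal{I}\Gamma(as)$ — whose morphisms fix the basepoint $0$ — is essential, since it guarantees the $M$-slot is never multiplied into an $A$-slot. Away from this verification the argument is formal, being a direct instance of Theorem \ref{reflexive-thm} combined with the definition of reflexive homology.
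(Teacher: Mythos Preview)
Your proposal is correct and follows exactly the same approach as the paper: apply Theorem \ref{reflexive-thm} with $F=\mathsf{R}(A,M)$ and then identify $\mathsf{R}(A,M)\circ i$ with the reflexive Loday functor $\mathcal{L}(A,M)$. The paper's own proof simply asserts this identification without further comment, whereas you spell out the verification on generators (faces, degeneracies, and reflections via Lemma \ref{Delta-R-subcat-lem}); this extra detail is accurate and does not diverge from the paper's argument.
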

\begin{proof}
The corollary follows from Theorem \ref{reflexive-thm}, upon noting that the composite 
\[\mathsf{R}(A,M) \circ i\colon \Delta R^{op}\rightarrow \mathbf{Mod}_k\]
coincides with the Loday functor $\mathcal{L}(A,M)$ of Definition \ref{loday-defn}. 
\end{proof}

\bibliographystyle{alpha}
\bibliography{PR-Type-Refs}

\begin{thebibliography}{{Mac}70}

\bibitem[Ang08]{Angeltveit1}
Vigleik Angeltveit.
\newblock Enriched {Reedy} categories.
\newblock {\em Proc. Am. Math. Soc.}, 136(7):2323--2332, 2008.

\bibitem[Ang09]{Angeltveit2}
Vigleik Angeltveit.
\newblock The cyclic bar construction on {{\(A_\infty H\)}}-spaces.
\newblock {\em Adv. Math.}, 222(5):1589--1610, 2009.

\bibitem[Aul10]{Ault}
Shaun~V. Ault.
\newblock Symmetric homology of algebras.
\newblock {\em Algebr. Geom. Topol.}, 10(4):2343--2408, 2010.

\bibitem[Con83]{Connes}
Alain Connes.
\newblock Cohomologie cyclique et foncteurs {${\rm Ext}\sp n$}.
\newblock {\em C. R. Acad. Sci. Paris S\'er. I Math.}, 296(23):953--958, 1983.

\bibitem[Cor93a]{Cort1}
Guillermo Corti{\~n}as.
\newblock {$L$}-theory and dihedral homology.
\newblock {\em Math. Scand.}, 73(1):21--35, 1993.

\bibitem[Cor93b]{Cort2}
Guillermo Corti{\~n}as.
\newblock {$L$}-theory and dihedral homology. {II}.
\newblock {\em Topology Appl.}, 51(1):53--69, 1993.

\bibitem[Dun89]{Dunn}
Gerald Dunn.
\newblock Dihedral and quaternionic homology and mapping spaces.
\newblock {\em $K$-Theory}, 3(2):141--161, 1989.

\bibitem[Fie]{Fie}
Z.~Fiedorowicz.
\newblock The symmetric bar construction.

\bibitem[FL91]{FL}
Zbigniew Fiedorowicz and Jean-Louis Loday.
\newblock Crossed simplicial groups and their associated homology.
\newblock {\em Trans. Amer. Math. Soc.}, 326(1):57--87, 1991.

\bibitem[FT87]{FT}
B.~L. Fe\u{\i}gin and B.~L. Tsygan.
\newblock Additive {$K$}-theory.
\newblock In {\em {$K$}-theory, arithmetic and geometry ({M}oscow,
  1984--1986)}, volume 1289 of {\em Lecture Notes in Math.}, pages 67--209.
  Springer, Berlin, 1987.

\bibitem[Gra20]{DMG-IFAS}
Daniel Graves.
\newblock {PROPs} for involutive monoids and involutive bimonoids.
\newblock {\em Theory Appl. Categ.}, 35:1564--1575, 2020.

\bibitem[Gra22]{DMG-HO}
Daniel Graves.
\newblock Hyperoctahedral homology for involutive algebras.
\newblock {\em Homology Homotopy Appl.}, 24(1):1--26, 2022.

\bibitem[Gra23a]{DMG-e-inf}
Daniel Graves.
\newblock E-infinity structure in hyperoctahedral homology.
\newblock {\em Homology Homotopy Appl.}, 25(1):1--19, 2023.

\bibitem[Gra23b]{DMG-reflexive}
Daniel Graves.
\newblock Reflexive homology.
\newblock {\em Proc. Roy. Soc. Edinburgh Sect. A}, 2023.
\newblock doi:10.1017/prm.2023.69.

\bibitem[HPV13]{HPV}
Manfred Hartl, Teimuraz Pirashvili, and Christine Vespa.
\newblock {Polynomial Functors from Algebras Over a Set-Operad and Nonlinear
  {M}ackey Functors}.
\newblock {\em International Mathematics Research Notices}, 2015(6):1461--1554,
  12 2013.

\bibitem[HV15]{HV}
Eric Hoffbeck and Christine Vespa.
\newblock Leibniz homology of {L}ie algebras as functor homology.
\newblock {\em J. Pure Appl. Algebra}, 219(9):3721--3742, 2015.

\bibitem[KLS87]{KLS-dihed}
R.~L. Krasauskas, S.~V. Lapin, and Yu.~P. Solov'ev.
\newblock Dihedral homology and cohomology. {B}asic concepts and constructions.
\newblock {\em Mat. Sb. (N.S.)}, 133(175)(1):25--48, 143, 1987.

\bibitem[Kra87]{Kras-skew}
R.~Krasauskas.
\newblock Skew-simplicial groups.
\newblock {\em Litovsk. Mat. Sb.}, 27(1):89--99, 1987.

\bibitem[Lod87]{Loday-dih}
Jean-Louis Loday.
\newblock Homologies di\'{e}drale et quaternionique.
\newblock {\em Adv. in Math.}, 66(2):119--148, 1987.

\bibitem[Lod90]{Lodder1}
Gerald~M. Lodder.
\newblock Dihedral homology and the free loop space.
\newblock {\em Proc. London Math. Soc. (3)}, 60(1):201--224, 1990.

\bibitem[Lod93]{Lodder3}
Jerry~M. Lodder.
\newblock Dihedral homology and homotopy fixed point sets.
\newblock In {\em Algebraic topology ({O}axtepec, 1991)}, volume 146 of {\em
  Contemp. Math.}, pages 215--224. Amer. Math. Soc., Providence, RI, 1993.

\bibitem[Lod96]{Lodder2}
Jerry~M. Lodder.
\newblock Dihedral homology and {Hermitian} {{\(K\)}}-theory.
\newblock {\em \(K\)-Theory}, 10(2):175--196, 1996.

\bibitem[Lod98]{Lod}
Jean-Louis Loday.
\newblock {\em Cyclic homology}, volume 301 of {\em Grundlehren der
  Mathematischen Wissenschaften [Fundamental Principles of Mathematical
  Sciences]}.
\newblock Springer-Verlag, Berlin, second edition, 1998.
\newblock Appendix E by Mar{\'{\i}}a O. Ronco, Chapter 13 by the author in
  collaboration with Teimuraz Pirashvili.

\bibitem[LR11]{LR}
Muriel Livernet and Birgit Richter.
\newblock An interpretation of {$E_n$}-homology as functor homology.
\newblock {\em Math. Z.}, 269(1-2):193--219, 2011.

\bibitem[{Mac}70]{MCM}
S.~{Mac Lane}.
\newblock {The Milgram bar construction as a tensor product of functors}.
\newblock In {\em The Steenrod Algebra and Its Applications: A Conference to
  Celebrate N.E. Steenrod's Sixtieth Birthday}, volume 168 of {\em Lecture
  Notes in Math.}, pages 135--152. Springer, Berlin, 1970.

\bibitem[Pir00]{PirH}
Teimuraz Pirashvili.
\newblock Hodge decomposition for higher order {H}ochschild homology.
\newblock {\em Ann. Sci. \'{E}cole Norm. Sup. (4)}, 33(2):151--179, 2000.

\bibitem[Pir02]{Pir-PROP}
Teimuraz Pirashvili.
\newblock On the {PROP} corresponding to bialgebras.
\newblock {\em Cah. Topol. G\'{e}om. Diff\'{e}r. Cat\'{e}g.}, 43(3):221--239,
  2002.

\bibitem[Pir03]{PirAQ}
Teimuraz Pirashvili.
\newblock Andr\'{e}-{Q}uillen homology via functor homology.
\newblock {\em Proc. Amer. Math. Soc.}, 131(6):1687--1694, 2003.

\bibitem[PR00]{PR00}
T.~Pirashvili and B.~Richter.
\newblock Robinson-{W}hitehouse complex and stable homotopy.
\newblock {\em Topology}, 39(3):525--530, 2000.

\bibitem[PR02]{PR02}
T.~Pirashvili and B.~Richter.
\newblock Hochschild and cyclic homology via functor homology.
\newblock {\em $K$-Theory}, 25(1):39--49, 2002.

\bibitem[S{\l}o03]{Slom}
Jolanta S{\l}omi{\'n}ska.
\newblock Decompositions of the category of noncommutative sets and
  {Hochschild} and cyclic homology.
\newblock {\em Cent. Eur. J. Math.}, 1(3):327--331, 2003.

\bibitem[Zim04]{Zimm}
Mathieu Zimmermann.
\newblock Base change for the functors {Tor}.
\newblock {\em Manuscr. Math.}, 114(1):117--125, 2004.

\end{thebibliography}

\end{document}